\newtheorem{thm}{Theorem}
\newtheorem{lem}{Lemma}
\newtheorem{rmk}{Remark}
\newtheorem*{prop*}{Proposition}
\def\Q{\mathbb{Q}}
\def\R{\mathbb{R}}
\def\N{\mathbb{N}}
\def\M{\mathcal{M}}
\def\d{\mathrm{d}}
\def\half{\frac{1}{2}}
\title[Omega Results for The Divisor and Circle Problems]{Omega Results for The Divisor and Circle Problems Using The Resonance Method}
\begin{document}
\author{Kamalakshya Mahatab}
	\address{Kamalakshya Mahatab\\ Department of Mathematics \\
		Indian Institute of Technology Kharagpur \\
			Kharagpur-721302,  India.} 
	\email{kamalakshya@maths.iitkgp.ac.in}

\subjclass[2020]{11N37, 11M06, 11P21}
\maketitle
\begin{abstract}
We apply the resonance method to obtain large values of general exponential sums with positive coefficients. As applications, we show improved $\Omega$-bounds for Dirichlet and Piltz divisor problems, Gauss circle Problem, and error terms for the mean square of the Riemann zeta function and the Dirichlet $L$-functions. 
\end{abstract}

\section{Introduction}
Let $d(n)$ be the number of divisors of $n$. Then 
\[\sum_{n\leq x} d(n)= x\log x + (2\gamma-1)x+\Delta(x), \]
where $\Delta(x)$ is the error term. The famous Dirichlet divisor problem asks for the best possible upper bound for $\Delta(x)$, which remains unsolved. It is widely conjectured that
\[\Delta(x)\ll x^{1/4+\epsilon} \text{ for any } \epsilon>0.\]
However, the best known upper bound for $\Delta(x)$ is due to Huxley \cite{hux}, who proved 
\[\Delta(x)=O\left(x^{\frac{131}{416}}(\log x)^{\frac{26947}{8320}}\right).\]
We investigate the above problem from a different perspective. We may ask, how large $\Delta(x)$ could be when $x\rightarrow\infty$. In this direction, the best known $\Omega~\text{bound}$ for $\Delta(x)$ is due to Soundararajan\cite{So}\footnote{Throughout this paper we will use the notations $\log_2 X := \log \log X \text{ and } \log_3 X = \log \log \log X.$}:
\begin{align*}
 \Delta(x)=\Omega\left((x\log x)^{1/4}(\log_2 x)^{3/4(2^{4/3}-1)}(\log_3 x)^{-5/8}\right). 
\end{align*}
He conjectures that the power of $\log_2 x$ is optimal. So we only hope to improve on the power of $\log_3 x$. In this paper, we improve the power from $-5/8$ to $-3/8$.
\begin{thm}\label{thm:divisor}
We have
 \[\max_{X/2<x\leq 5X^{3/2}(\log X)^2}\left|\frac{\Delta(x^2)}{\sqrt x}\right|
  \gg (\log X)^{1/4} (\log_2X)^{ (3/4)(2^{4/3}-1)}(\log_3 X)^{-3/8}.\]
\end{thm}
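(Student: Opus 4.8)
The plan is to convert $\Delta(x^2)/\sqrt x$ into a finite exponential sum with nonnegative coefficients and then resonate against it. First I would use the truncated Voronoï summation formula to write, for $X/2<x\le 5X^{3/2}(\log X)^2$ and with $e(t)=e^{2\pi i t}$,
\[\frac{\Delta(x^2)}{\sqrt x}=\frac{1}{\pi\sqrt2}\,\mathrm{Re}\!\left(e^{-i\pi/4}\sum_{n\le N}\frac{d(n)}{n^{3/4}}\,e(2\sqrt n\,x)\right)+O\!\left(E(x)\right),\]
where $N$ is a suitable power of $X$ and $E(x)$ is a permissible error. The particular interval $\left[X/2,\,5X^{3/2}(\log X)^2\right]$ is tailored so that (i) the Voronoï tail past $N$ and the smoothing error are of strictly smaller order than the target bound, and (ii) the interval is long enough to resolve the smallest gaps among the frequencies of the resonator chosen below. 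This reduces Theorem~\ref{thm:divisor} to showing that the sum $\sum_{n\le N}\frac{d(n)}{n^{3/4}}e(2\sqrt n\,x)$ — which has nonnegative coefficients $f(n)=d(n)/n^{3/4}$ and real frequencies $2\sqrt n$ — attains, somewhere on this interval, absolute value at least a constant times $(\log X)^{1/4}(\log_2X)^{(3/4)(2^{4/3}-1)}(\log_3X)^{-3/8}$.

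For the second step I would feed this sum into our general large-values estimate for exponential sums with nonnegative coefficients: for a nonnegative compactly supported smooth weight $\Phi$ adapted to the interval and a resonator $R(x)=\sum_{m\in\mathcal M}r(m)\,e(2\sqrt m\,x)$ with $r(m)>0$, the maximum of $|\,\cdot\,|$ is bounded below by
\[\frac{\Bigl|\,\int \Phi(x)\,|R(x)|^2\sum_{n\le N}\frac{d(n)}{n^{3/4}}\,e(2\sqrt n\,x)\,\d x\,\Bigr|}{\int \Phi(x)\,|R(x)|^2\,\d x}.\]
Expanding $|R(x)|^2=\sum_{m_1,m_2}r(m_1)r(m_2)\,e\!\bigl(2(\sqrt{m_1}-\sqrt{m_2})x\bigr)$ and using that $\widehat\Phi$ localises the contribution to the near-diagonal where $|\sqrt{m_1}-\sqrt{m_2}+\sqrt n|$ is small, the numerator becomes a positive quantity measuring how efficiently the frequency set $\{2\sqrt m:m\in\mathcal M\}$ captures integers $n$ carrying large weight $d(n)/n^{3/4}$. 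The natural choice is a multiplicative resonator: let $\mathcal M$ be the set of products of primes taken from a finite set $\mathcal P$, and let $r$ be multiplicative with $r(p)$ of the shape $p^{-1/4}(\log p)^{-1}$ times a free scaling parameter. With this choice the resonance ratio factors as an Euler product over $p\in\mathcal P$ whose local factors — because $d=2^{\omega}$ on the squarefree integers that dominate — are of the form $1+2r(p)^2p^{-3/4}/(1+r(p)^2)$, and the bound assumes the shape $(\log X)^{1/4}\prod_{p\in\mathcal P}(\text{local factor})$, the factor $(\log X)^{1/4}$ matching Hardy's classical $\Omega$-bound, subject to a support constraint $\sum_{p\in\mathcal P}r(p)^2\log p\ll\log N$ that comes from requiring $\mathcal M$ to lie below a fixed power of $X$.

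It then remains to optimise over $\mathcal P$ and the scaling parameter. Taking $\mathcal P$ to be a window of primes up to size roughly $\exp\!\bigl((\log_2X)^{4/3}\bigr)$ and balancing, via Lagrange multipliers, the gain $2^{\#\mathcal P}$ from the divisor weights against the cost of spreading the resonator's $L^2$-mass over $\#\mathcal P$ primes — the optimal balance being what produces the numerical constant $2^{4/3}$ in the exponent — one obtains the factor $(\log_2X)^{(3/4)(2^{4/3}-1)}$ together with a power of $\log_3X$. I expect the genuine obstacle to be forcing that power of $\log_3X$ down from Soundararajan's $-5/8$ to $-3/8$: in his argument the $\log_3$ loss enters through the crude handling of the near-diagonal terms and of the $L^1$ and $L^2$ norms of the resonator, and improving it calls for a more efficient resonator — for instance, in the spirit of the Bondarenko--Seip refinements for $\zeta$ on the critical line, allowing $\mathcal M$ to contain higher prime powers (a ``long'' multiplicative resonator) and/or splitting $\mathcal P$ into several scales with separately tuned weights — while at the same time checking that the off-diagonal terms, the Voronoï truncation error $E(x)$, and the smoothing error all stay of strictly smaller order than the sharpened main term. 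Getting that trade-off right, rather than the Euler-product optimisation itself, is where the real work lies; the same scheme then applies, with only cosmetic changes, to the Gauss circle problem and to the mean-square error terms mentioned in the abstract.
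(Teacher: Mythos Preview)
Your setup via Vorono\"i and the resonance inequality is correct, and you are right that the entire content lies in pushing the $\log_3 X$ exponent from $-5/8$ to $-3/8$. However, the outline as written only reproduces Soundararajan's argument: a resonator supported on squarefree integers built from a prime set $\mathcal P$, with multiplicative weights of the shape $r(p)\asymp p^{-1/4}(\log p)^{-1}$ and an Euler product over primes. That scheme, optimised exactly as you describe, yields $-5/8$. Your proposed fixes --- Bondarenko--Seip GCD-sum refinements, allowing higher prime powers, splitting $\mathcal P$ into scales --- are natural guesses but are not what gains the extra $(\log_3 X)^{1/4}$ here; Bondarenko--Seip is tuned to the multiplicative frequencies $n^{it}$ on the critical line, not to the additive frequencies $\sqrt n$ arising from Vorono\"i.

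The mechanism the paper uses is different in kind. One takes $\mathcal M$ to be a set of \emph{frequencies} $\lambda_n=4\pi\sqrt n$ with $n$ squarefree, $n\in[C_1\alpha,2\alpha]$, $\omega(n)=[\lambda\log_2\alpha]$; the point is that these $\sqrt n$ are $\Q$-linearly independent. The resonator is then supported not on $\mathcal M$ but on the full semigroup $\N[\mathcal M]$ of nonnegative integer combinations, with weights $r(u)=e^{-u/2\alpha}$, so that $R(x)=\prod_{\lambda_n\in\mathcal M}(1-r(\lambda_n)e^{i\lambda_n x})^{-1}$ is an Euler product over frequencies, not over primes. One first convolves $F_\beta$ against a shifted Fej\'er kernel $(\sin\alpha u/u)^2e^{-2i\alpha u}$; this makes every term in the expansion of $I_1$ nonnegative, so one may freely drop terms, and after shifting $v\mapsto v+\lambda_n$ the ratio $I_1/I_2$ is bounded below by $\tfrac12\sum_{\lambda_n\in\mathcal M}a_n r(\lambda_n)w_\alpha(\lambda_n)$. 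Since $\lambda_n\le 2\alpha$ forces $r(\lambda_n)\ge e^{-1}$, the resonator weights cost nothing, and the lower bound is simply $\gg\sum_{\lambda_n\in\mathcal M}a_n\gg M\,2^{\lambda\log_2\alpha}\alpha^{-3/4}$. Choosing $\alpha\asymp(\log X)(\log_2 X)^{1-\lambda+\lambda\log\lambda}(\log_3 X)^{1/2}$ to make $M\asymp\log X$ (so the error terms are controlled) and optimising $\lambda=2^{4/3}$ gives $-3/8$. The saving over Soundararajan comes precisely from the fact that this ``long'' additive resonator, in the style of Aistleitner--Mahatab--Munsch for $\zeta(1+it)$, decouples the size constraint on $|\mathcal M|$ from the resonator normalisation; your multiplicative resonator over a prime set cannot do this.
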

In particular,
\[\Delta(x)=\Omega\left((x\log x)^{1/4}(\log_2 x)^{3/4(2^{4/3}-1)}(\log_3 x)^{-3/8}\right).\]

We may explore the possibility of removing the factor $(\log_3 x)^{-3/8}$ by trying to demonstrate that there are at least $\alpha(\log \alpha)^{\lambda-1-\lambda\log\lambda}$ integers in $[\alpha, C\alpha]$ that have more than $\lambda \log_2 \alpha$ distinct prime factors. However, the choice of $\alpha$ is limited in order to keep the error under control, which makes this approach challenging.

By the Vorono\"i summation formula\cite{tit}, $\Delta(x^2)$ can be represented by the conditional covergent series
\[\Delta(x^2)=\frac{x^{1/2}}{\pi\sqrt{2}}\sum_{n=1}^{\infty}\frac{d(n)}{n^{3/4}}\cos\left(4\pi\sqrt n x -\frac{\pi}{4}\right).\]
To improve workability, we may truncate the sum mentioned above while ensuring a manageable error margin \cite{So}:
\[\Delta(x^2)=\frac{x^{1/2}}{\pi\sqrt{2}}\sum_{n\leq X^{3}}\frac{d(n)}{n^{3/4}}\cos\left(4\pi\sqrt n x -\frac{\pi}{4}\right) + O(X^\epsilon),\]
uniformly for $\sqrt X \leq x\leq X^{3/2}$, and for any $\epsilon>0$. 

We will use the resonance method on the exponential sum 
\[\sum_{n\leq X^{3}}\frac{d(n)}{n^{3/4}}\cos\left(4\pi\sqrt n x -\frac{\pi}{4}\right),\]
to get large values. The choice of our resonator is inspired by the work of Aistleitner, Mahatab, and Munsch\cite{aist2}. Previously, the resonance method has been widely used to identify large values of $L-$functions\cite{aist, BS1, dlb, So}. This paper aims to extend the application of the resonance method to general exponential sums defined as follows:
\[F_\beta(x)=\sum_{n\leq X^{A_1}}a_n \cos(x\lambda_n+\beta),\]
where $\lambda_n $ and $a_n$ are assumed to be positive. A detailed lowerbound for this sum is given in Theorem~\ref{thm:main}. Although the lower bound obtained here is explicit, there is potential for improvement on the explicit constant.

As our method for showing large values of $\Delta(x)$ is general in nature, it can be applied to the Circle problem, the Piltz divisor problem, and the error term for the mean square of the Riemann zeta functionand Dirichlet $L$-functions. Below, we list these applications, while the details of the proofs are discussed in  Section~\ref{sec:applications}.

\begin{thm}[Circle Problem] ~\label{thm:circle}

Let 
\[P(x)=\sum_{n\leq x}r(n)-\pi x,\]
where $r(n)$ denotes the number of ways of writing $n$ as sum of two squares. Then
 \[\max_{X/2<x\leq 5X^{3/2}(\log X)^2}\left|\frac{P(x^2)}{\sqrt x}\right|
  \gg (\log X)^{1/4} (\log_2X)^{ (3/4)(2^{4/3}-1)}(\log_3 X)^{-3/8}.\]  
\end{thm}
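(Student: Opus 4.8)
The plan is to derive Theorem~\ref{thm:circle} as a direct analogue of Theorem~\ref{thm:divisor}, exploiting the fact that the error term $P(x^2)$ in the circle problem has essentially the same Voronoï-type expansion as $\Delta(x^2)$, with $d(n)$ replaced by $r(n)$. First I would recall the truncated Voronoï summation formula for $P(x^2)$, which reads
\[
P(x^2) = \frac{x^{1/2}}{\pi}\sum_{n\leq X^3}\frac{r(n)}{n^{3/4}}\cos\left(2\pi\sqrt{n}\,x - \frac{\pi}{4}\right) + O(X^\epsilon),
\]
uniformly for $\sqrt{X}\leq x\leq X^{3/2}$; the only structural differences from the divisor case are the constant in front, the argument $2\pi\sqrt{n}x$ in place of $4\pi\sqrt{n}x$, and the coefficients $r(n)$ in place of $d(n)$. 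None of these affect the resonance argument: the frequencies $\lambda_n = 2\pi\sqrt{n}$ are still positive and well-separated in exactly the same way, and the multiplicative coefficients $r(n)$ are still non-negative.

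The key point to verify is that the coefficients $r(n)/n^{3/4}$ feed into the main term of Theorem~\ref{thm:main} at least as favorably as $d(n)/n^{3/4}$ does. Here I would use that $r(n) = 4\sum_{d\mid n}\chi_{-4}(d)$ is multiplicative, non-negative, and satisfies $r(p) = 8$ for primes $p\equiv 1\pmod 4$ while $r(p)=0$ for $p\equiv 3\pmod 4$. Thus on the support of the resonator — which, following Aistleitner--Mahatab--Munsch, is built from products of primes drawn from a carefully chosen dyadic range — one restricts attention to primes $p\equiv 1\pmod 4$, a set of density $\tfrac12$ among the primes. On such integers $r(n) = 8^{\omega(n)}$, which plays precisely the role that $d(n)=2^{\omega(n)}$ plays in the divisor problem, only with a larger base; since $8 > 2$, the relevant sums over the resonator support are if anything larger, and the density loss of $\tfrac12$ per prime is absorbed into the choice of the dyadic interval from which primes are selected (one simply takes a slightly longer interval), exactly as in \cite{aist2}. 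Carrying the same optimization of the resonator parameters through, the main term of Theorem~\ref{thm:main} produces the same lower bound $(\log X)^{1/4}(\log_2 X)^{(3/4)(2^{4/3}-1)}(\log_3 X)^{-3/8}$.

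Concretely, the steps are: (i) state the truncated Voronoï formula for $P(x^2)$ with its $O(X^\epsilon)$ error over $\sqrt{X}\leq x\leq X^{3/2}$; (ii) set $a_n = r(n)/n^{3/4}$, $\lambda_n = 2\pi\sqrt{n}$, $\beta = -\pi/4$, and $A_1 = 3$, and check the hypotheses of Theorem~\ref{thm:main}; (iii) choose the resonator supported on squarefree products of primes $p\equiv 1\pmod 4$ from the appropriate dyadic window, verify the arithmetic sums $\sum r(n)/n^{3/4}$ (weighted by the resonator) match the divisor-case bound up to the benign constants $8$ versus $2$ and the density factor $\tfrac12$; (iv) plug into Theorem~\ref{thm:main} to obtain the claimed $\gg$ bound on $\max|P(x^2)/\sqrt{x}|$; and (v) absorb the $O(X^\epsilon)$ error, which is negligible against the main term. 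The main obstacle — really the only nontrivial point — is step (iii): one must confirm that restricting the resonator's primes to the residue class $1\pmod 4$ does not degrade the final exponents, which hinges on the Siegel--Walfisz / prime number theorem in arithmetic progressions giving enough primes $\equiv 1\pmod 4$ in the dyadic window, and on the base $8 = r(p)$ being compatible with the combinatorial optimization that produced $(3/4)(2^{4/3}-1)$ and $-3/8$; since $d(p^2)$-type contributions and the exponent $2^{4/3}$ arise from the same $\sum_{d\mid n}$ structure, this goes through verbatim.
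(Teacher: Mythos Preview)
Your overall strategy coincides with the paper's: apply Theorem~\ref{thm:main} to the truncated Vorono\"i sum for $P(x^2)$ with $a_n=r(n)/n^{3/4}$ and $\lambda_n=2\pi\sqrt n$, and take $\mathcal M$ supported on squarefree integers all of whose prime factors are $\equiv 1\pmod 4$ with $\omega(n)=[\lambda\log_2\alpha]$. But step~(iii) contains an arithmetic error that undermines your optimization.

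You assert that on such $n$ one has $r(n)=8^{\omega(n)}$ because $r(p)=8$. This is false: $r$ is not completely multiplicative. It is $r/4=\sum_{d\mid\cdot}\chi_{-4}(d)$ that is multiplicative, with $(r/4)(p)=1+\chi_{-4}(p)=2$ for $p\equiv 1\pmod 4$; hence for squarefree $n$ with all such prime factors one gets $r(n)=4\cdot 2^{\omega(n)}$, which is of the \emph{same} order as $d(n)=2^{\omega(n)}$, not larger. (Check: $r(p_1p_2)=16\neq 64=r(p_1)r(p_2)$.) The paper uses exactly $r(m)\gg 2^{[\lambda\log_2\alpha]}$. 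So the ``$8>2$'' gain you invoke to cancel the density loss simply does not exist.

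What actually happens in the paper is different. Restricting to primes $\equiv 1\pmod 4$ costs a factor $(\log\alpha)^{-\lambda\log 2}$ in the size of $\mathcal M$, so that
\[
M\asymp \frac{\alpha}{\sqrt{\log_2\alpha}}(\log\alpha)^{\lambda-1-\lambda\log\lambda-\lambda\log 2}.
\]
Since the binding constraint in Theorem~\ref{thm:main} is $M\ll\log X$ (not a constraint on $\alpha$ directly), one compensates by taking $\alpha$ larger by a factor $(\log_2 X)^{\lambda\log 2}$ than in the divisor case. Feeding this through with the correct $r(n)\asymp 2^{\omega(n)}$ changes the expression to be optimized in $\lambda$, and the paper's optimum lands at $\lambda=2^{1/3}$ rather than the divisor value $2^{4/3}$. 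You should redo the computation with the correct $r(n)$ and the correct $M$, and track the $\log_2 X$ exponent that actually emerges. (Minor point: the paper's Vorono\"i formula has phase $+\pi/4$ with a sign in front of $P$, not $-\pi/4$; this is irrelevant once absolute values are taken.)
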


\begin{thm}[Piltz Divisor Problem] ~\label{thm:piltz} 

Let 
\[\Delta_k(x)=\sum_{n\leq x}d_k(n)-\text{Res}_{s=1}\zeta(s)^k \frac{x^s}{s},\]
where $d_k(n)$ denotes the number of ways of expressing $n$ as product of $k$ factors. Then
 \[\max_{X/2<x\leq 5X^{3/2}(\log X)^2} \left|\frac{\Delta_k(x^k)}{x^{\frac{k-1}{2}}}\right|
  \gg (\log X)^{(k-1)/2k} (\log_2X)^{ ((k+1)/2k)(k^{2k/(k+1)}-1)}(\log_3 X)^{-1/2+\frac{k-1}{4k}}.\]  
  We may replace $|\Delta_k(x)|$ with $\Delta_k(x)$ when $k\equiv 3 \pmod 8$ and with $-\Delta_k(x)$ when $k\equiv 7\pmod 8$.
\end{thm}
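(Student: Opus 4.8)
The plan is to reduce Theorem~\ref{thm:piltz} to an application of the general large-values result, Theorem~\ref{thm:main}, in exactly the same way Theorem~\ref{thm:divisor} is handled, but with the arithmetic input $d(n)$ replaced by $d_k(n)$. First I would record the Vorono\"i-type / truncated approximation for $\Delta_k(x^k)$: by the work quoted in the introduction (and the classical Vorono\"i formula for $d_k$, see Titchmarsh), one has
\[
\Delta_k(x^k)=c_k\, x^{(k-1)/2}\sum_{n\le X^{A}}\frac{d_k(n)}{n^{(k+1)/2k}}\cos\!\left(2\pi k\, n^{1/k}x-\frac{(k-1)\pi}{4}\right)+O(X^\epsilon),
\]
uniformly for $\sqrt X\le x\le X^{3/2}$, for a suitable exponent $A=A(k)$ and constant $c_k>0$. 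So it suffices to produce large values of the exponential sum
\[
F_\beta(x)=\sum_{n\le X^{A}} a_n\cos(x\lambda_n+\beta),\qquad a_n=\frac{d_k(n)}{n^{(k+1)/2k}},\quad \lambda_n=2\pi k\, n^{1/k},\quad \beta=-\frac{(k-1)\pi}{4},
\]
which is precisely the shape to which Theorem~\ref{thm:main} applies, since $a_n,\lambda_n>0$.

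Next I would carry out the resonator construction. Following Aistleitner--Mahatab--Munsch and Soundararajan, I would take a resonator supported on integers built out of primes in a dyadic range $(\mathcal P,\mathcal P^{1+o(1)}]$ with $\mathcal P$ a small power of $\log X$, assigning each such prime a weight of the form $L/(\sqrt p\,\log p)$ for an optimized parameter $L$; the resonator is then $R(x)=\sum_m r(m)\cos(x\mu_m)$ (or the analogous object in the notation of Theorem~\ref{thm:main}) with $\mu_m=2\pi k\, m^{1/k}$ and $r(m)$ multiplicative. The key quantitative point is that the relevant ratio in Theorem~\ref{thm:main} is governed by a sum of $a_n r(m)^2$-type terms over $n=m$ (the diagonal) against $\sum r(m)^2$; since $d_k$ is multiplicative with $d_k(p)=k$, the Euler product for the diagonal contribution picks up a factor essentially $\prod_p\bigl(1+k\,r(p)^2/p^{(k+1)/2k}\cdot(\text{weight})\bigr)$ versus $\prod_p(1+r(p)^2)$ in the denominator, and optimizing $L$ and the prime range makes this quotient as large as
\[
\exp\!\left((1+o(1))\,\frac{k+1}{2k}\,\bigl(k^{2k/(k+1)}-1\bigr)\sqrt{\frac{\log X\,\log_2 X}{\log_3 X}}\;\Big/\;(\text{normalizing power})\right),
\]
which after taking logarithms yields the stated powers $(\log X)^{(k-1)/2k}$, $(\log_2 X)^{((k+1)/2k)(k^{2k/(k+1)}-1)}$, and the $\log_3 X$ power. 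The exponent $2k/(k+1)$ appearing inside replaces the $4/3$ of the divisor case ($k=2$ gives $2k/(k+1)=4/3$), and comes from optimizing $L^{k/(k-1)}\cdot(\text{something})$ against the number of primes used; I would reproduce the divisor-case optimization verbatim with the exponents carrying a parameter $k$.

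For the sign-refinement statement — replacing $|\Delta_k(x)|$ with $\pm\Delta_k(x)$ when $k\equiv3,7\pmod 8$ — I would note that the phase shift in the cosine is $-(k-1)\pi/4$, so when $k\equiv3\pmod8$ we have $(k-1)\pi/4\equiv\pi/2\pmod{2\pi}$ and the cosine becomes $\sin$, while when $k\equiv7\pmod8$ it becomes $-\sin$; in those cases one runs the resonance method against a resonator shifted so as to detect the sum (not its absolute value) with a definite sign, exactly as Soundararajan does for $\Delta(x)$ itself. Concretely this means multiplying the resonator $R(x)$ by an extra smooth factor (or choosing the test point via a sign-weighted integral $\int F_\beta(x)R(x)\,w(x)\,\d x$ with $w\ge0$) so that the main term retains its sign; the relevant identity $\cos(\theta-(k-1)\pi/4)=\mp\sin\theta$ does the bookkeeping. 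The main obstacle I anticipate is purely bookkeeping: tracking the $k$-dependence through the optimization of the resonator parameters (the prime-range length, the weight parameter $L$, and the truncation level $X^A$) and verifying that the error term $O(X^\epsilon)$ from the Vorono\"i truncation stays negligible against the main term $x^{(k-1)/2}\cdot(\log X)^{(k-1)/2k}\cdots$ on the range $X/2<x\le 5X^{3/2}(\log X)^2$ — the range being chosen, as in Theorem~\ref{thm:divisor}, so that the resonator integral localizes $x$ there. No genuinely new idea beyond the $k=2$ case should be needed; the proof is a parametrized repetition, with the sign statement a direct consequence of the shape of the phase.
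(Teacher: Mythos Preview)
Your proposal has two genuine gaps. First, the truncated Vorono\"i formula you invoke,
\(\Delta_k(x^k)=c_k x^{(k-1)/2}\sum_{n\le X^A}\tfrac{d_k(n)}{n^{(k+1)/2k}}\cos(\cdots)+O(X^\epsilon)\),
is not available for $k\ge 3$: truncating the $d_k$-Vorono\"i series at a finite level does not give an $O(X^\epsilon)$ error in general. This is precisely why the paper (following Soundararajan) instead uses the Gaussian-smoothed identity
\[
\frac{\alpha^{1/k}}{\sqrt\pi}\int_{-\infty}^\infty\Delta_k(x^ke^{u/x})e^{-u^2\alpha^{2/k}}\,\d u
= x^{\frac{k-1}{2}}\pi\sqrt k\sum_{n\ge1}\frac{d_k(n)}{n^{(k+1)/2k}}e^{-\pi^2(n/\alpha)^{2/k}}\cos\Bigl(2\pi k n^{1/k}x+\tfrac{(k-3)\pi}{4}\Bigr)+O(x^{k/2-3/5}\alpha^{1/2+\epsilon}),
\]
so that the coefficients $a_n$ in Theorem~\ref{thm:main} carry the extra factor $e^{-\pi^2(n/\alpha)^{2/k}}$ and the truncation is automatic. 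Without this device your first step already fails.

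Second, the resonator you describe --- multiplicative weights $r(p)=L/(\sqrt p\log p)$ on primes in a dyadic range, Euler-product ratios $\prod_p(1+k\,r(p)^2/p^{(k+1)/2k})/\prod_p(1+r(p)^2)$, and a main term of shape $\exp(c\sqrt{\log X\log_2X/\log_3X})$ --- is Soundararajan's resonator for $\zeta(\tfrac12+it)$, not the one used here. Theorem~\ref{thm:main} takes $r(u)=e^{-u/2\alpha}$ on $\mathbb N[\mathcal M]$ with $\mathcal M=\{\lambda_n:n\in[C_1\alpha,2\alpha],\ \omega(n)=[\lambda\log_2\alpha],\ n\text{ squarefree}\}$, and its output is simply $\tfrac{\pi}{4e}\sum_{\lambda_n\in\mathcal M}a_n$. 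One then bounds $d_k(n)\ge k^{[\lambda\log_2\alpha]}$, counts $|\mathcal M|$ via Sathe's theorem, and optimizes the single parameter $\lambda$ at $\lambda=k^{2k/(k+1)}$; there is no prime-weight optimization. Finally, the phase is $\beta=\tfrac{(k-3)\pi}{4}$, not $-\tfrac{(k-1)\pi}{4}$; for $k\equiv3\pmod8$ this gives $\beta\equiv0$ and for $k\equiv7\pmod8$ it gives $\beta\equiv\pi$, and the sign statement then follows directly from the observation (Remark after Lemma~\ref{lem:I1byI2}) that $I_1$ is essentially a positive real number --- no shifted resonator or $\sin$/$\cos$ trick is needed.
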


Theorem~\ref{thm:circle} and \ref{thm:piltz} improves the following bounds of Soundararajan\cite{So}
\begin{align*}
 P(x)&=\Omega\left((x\log x)^{1/4} (\log_2 x)^{ (3/4)(2^{4/3}-1)}(\log_3 x)^{-5/8}\right),\\
 \Delta_k(x)&=\Omega\left((x\log x)^{(k-1)/2k} (\log_2 x)^{ ((k+1)/2k)(k^{2k/(k+1)}-1)}(\log_3 x)^{-1/2-\frac{k-1}{4k}}\right),
\end{align*}
where the $\Omega$ bound of $\Delta_k(x)$ can be replaced with $\Omega_+$ when $k\equiv 3 \pmod 8$ and with $\Omega_-$ when $k\equiv 7\pmod 8$. The next theorem improves the result of Lau and Tsang \cite{Tsang} by a factor of $(\log_3 X)^{1/4}$.

\begin{thm}[Mean Square of The Riemann Zeta] ~\label{thm:riemann}

Let 
\[E(x)=\int_{0}^{x}\left|\zeta\left(\half + it\right)\right|^2\d t-x\log\frac{x}{2\pi}-(2\gamma-1)x,\]
where $\zeta$ denotes the Riemann zeta function. Then
 \[\max_{X/2<x\leq 5X^{3/2}(\log X)^4}\left|\frac{E(2\pi x^2)}{\sqrt x}\right|
  \gg (\log X)^{1/4} (\log_2 X)^{ (3/4)(2^{4/3}-1)}(\log_3 X)^{-3/8}.\]  
\end{thm}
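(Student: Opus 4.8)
The plan is to run the argument of Theorem~\ref{thm:divisor} essentially verbatim, after replacing the Vorono\"i expansion of $\Delta$ by an explicit formula for $E$ and paying for one extra error term. Rather than invoke Atkinson's formula directly, I would start from Jutila's identity
\[ E(2\pi x^2) = 2\pi\,\Delta^*(x^2) + E^*(2\pi x^2), \qquad \Delta^*(y) := -\Delta(y) + 2\,\Delta(2y) - \tfrac{1}{2}\,\Delta(4y), \]
since the truncated Vorono\"i formula for $\Delta$ is already at hand. Inserting the truncated expansions of $\Delta(x^2)$, $\Delta(2x^2)$ and $\Delta(4x^2)$ (at a length $\asymp X^3$, up to logarithmic factors, valid uniformly over the range of $x$) and regrouping the three cosine series according to the common frequency $4\pi\sqrt{m}\,x$, a short computation with the multiplicativity of $d$ shows that the coefficient of $\cos(4\pi\sqrt m\,x-\pi/4)$ collapses to $(-1)^m d(m)$ — equivalently, one recovers the $\Sigma_1$-term of Atkinson's formula. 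One is thus left with
\[ \frac{E(2\pi x^2)}{\sqrt x} = \sqrt{2}\sum_{m\le X^3}\frac{(-1)^m d(m)}{m^{3/4}}\cos\left(4\pi\sqrt{m}\,x-\frac{\pi}{4}\right) + \frac{E^*(2\pi x^2)}{\sqrt x} + O\!\left(X^{\epsilon-1/2}\right), \]
so that, up to the sign $(-1)^m$ and the single extra error term $E^*(2\pi x^2)/\sqrt x$, the quantity to be bounded from below is exactly the divisor sum treated in Theorem~\ref{thm:divisor}; in particular, the gain of a factor $(\log_3 X)^{1/4}$ over Lau--Tsang will come for free from the combinatorial input already built into Theorem~\ref{thm:main}.

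Next I would observe that the oscillating sign $(-1)^m$ is harmless. The resonator constructed in the proof of Theorem~\ref{thm:main}, following Aistleitner--Mahatab--Munsch \cite{aist2}, is supported on products of distinct \emph{large} primes, hence in particular on odd integers; therefore every index $m$ that contributes to the diagonal (main) term of the resonance integral is odd, and on that diagonal $(-1)^m=-1$ is constant, while the even indices, which carry the opposite sign, are frequency-disjoint from the resonator and fall into the off-diagonal error along with the rest of the non-resonating mass. Consequently the proof of Theorem~\ref{thm:main}, applied to $-E(2\pi x^2)/\sqrt x$ with $a_m=d(m)m^{-3/4}$, $\lambda_m=4\pi\sqrt{m}$ and $\beta=-\pi/4$ (and the same length parameter as in Theorem~\ref{thm:divisor}), delivers
\[ \max_{X/2<x\le 5X^{3/2}(\log X)^4}\left|\frac{E(2\pi x^2)}{\sqrt x}\right| \gg (\log X)^{1/4}(\log_2 X)^{(3/4)(2^{4/3}-1)}(\log_3 X)^{-3/8}, \]
as soon as the contribution of $E^*(2\pi x^2)/\sqrt x$ to the resonance integral is shown to be of smaller order than the diagonal term.

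That last point is the real obstacle, and it is also the reason the admissible window is $(X/2,\,5X^{3/2}(\log X)^4]$ rather than the $(\log X)^2$-window of Theorem~\ref{thm:divisor}: no pointwise bound for $E^*$ is anywhere near strong enough here. Instead I would rely on Jutila's mean-square estimate $\int_0^T E^*(t)^2\,\d t \ll T^{4/3}(\log T)^{3}$ (in its local form). After expanding $|R(x)|^2$ into its diagonal and off-diagonal parts, substituting $t=2\pi x^2$ and breaking the $t$-range into short blocks on which $|R(x)|^2$ is essentially constant, a Cauchy--Schwarz estimate against this mean-square bound shows that $\int |E^*(2\pi x^2)|\,|R(x)|^2\,w(x)\,\d x$ is of strictly smaller order than the diagonal mass $\int |R(x)|^2\,w(x)\,\d x$, where $w$ is the smooth weight used in the resonance method, once the length of integration is enlarged by a fixed power of $\log X$ — which is precisely what the factor $(\log X)^4$ buys. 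With this one estimate in place, every remaining step (the construction of the resonator, the lower bound for the diagonal, and the bound for the off-diagonal sum) is identical to the proofs of Theorems~\ref{thm:main} and \ref{thm:divisor}; the circle-problem statement, Theorem~\ref{thm:circle}, is obtained by the same computation with $r(n)$ in place of $d(n)$ and Hardy's formula in place of Vorono\"i's.
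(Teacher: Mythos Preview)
The paper takes a quite different route. It does not go through Jutila's decomposition $E=2\pi\Delta^*+E^*$ at all; instead it quotes the Lau--Tsang framework \cite{Tsang} directly: first their inequality $\sup_{|y-x|\le 1}|E_1(y)|\ge\tfrac12 P(x,\tau)+O(\log\tau)$, where $P(x,\tau)$ is the truncated sum carrying the sign $(-1)^n$, and then their identity expressing the \emph{unsigned} sum $Q(x,\tau)=\sum d(n)n^{-3/4}\cos(4\pi\sqrt n x)(\cdots)$ as a positive finite combination $\sum_{i,j\le J}a^jb^{-i}P(\sqrt2^{\,j-i}x,\sqrt2^{\,j+i}\tau)+O(\sqrt\tau)$. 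Theorem~\ref{thm:main}, which requires $a_n\ge0$, is then applied to $Q$; the lower bound on $Q$ forces some $P$ to be large, hence $E$ is large. The rescalings by $\sqrt2^{\pm J}$ with $J=2\log_2\tau$ are what enlarge the window to $(\log X)^4$, not any mean-square saving for $E^*$.

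Your treatment of the sign $(-1)^m$ is a genuine gap. You assert that once $\mathcal M$ is restricted to odd indices the even $m$ ``fall into the off-diagonal error along with the rest of the non-resonating mass''. But look at the proof of Lemma~\ref{lem:I1byI2}: no off-diagonal error is ever estimated there. The inequality
\[
\sum_{n} a_n\,w_\alpha(\lambda_n)\,J(\lambda_n)\ \ge\ \sum_{\lambda_n\in\mathcal M} a_n\,w_\alpha(\lambda_n)\,r(\lambda_n)\,I_2
\]
is obtained purely by \emph{discarding} the terms with $\lambda_n\notin\mathcal M$, which is legitimate only because each $a_n\ge0$ and each $J(\lambda_n)=\int|R|^2\Phi\,e^{i\lambda_n x}\,\d x>0$. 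With $a_n=(-1)^{n+1}d(n)n^{-3/4}$ the even-$n$ summands are negative and cannot be dropped; nothing in the argument prevents them from cancelling the main term. To repair your route you would have to bound $\sum_{n\text{ even}}d(n)n^{-3/4}w_\alpha(\lambda_n)J(\lambda_n)$ directly, which amounts to Diophantine control on the gaps $|\lambda_n+u-v|$ for $u,v\in\mathbb N[\mathcal M]$ --- an entirely different and substantially harder mechanism than the positivity used here. (Your picture of the resonator is also off: the indices $n\in\mathcal M$ are square-free integers of size $\asymp\alpha$ with about $\lambda\log_2\alpha$ necessarily \emph{small} prime factors, not ``products of distinct large primes'', and nothing in the construction makes them odd.) The Lau--Tsang $P$--$Q$ identity is precisely the device that dissolves this sign obstruction so that Theorem~\ref{thm:main} applies verbatim, and the paper's proof leans on it essentially.
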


Similar to Theorem~\ref{thm:riemann}, we can also obtain a lower bound for the error term of mean square formula for Dirichlet $L$-function, which improves the result of Lau and Tsang \cite{Tsang2}.

\begin{thm}[Mean Square of The Dirichlet $L$-function] ~\label{thm:dirichlet}
Let
\[E(q, x)=\sum_{\chi \pmod q}\int_{0}^{x}\left|L\left(\half + it, \chi\right)\right|^2\d t-\frac{\phi(q)^2}{q}x\left(\log\frac{qx}{2\pi}+\sum_{p|q}\frac{\log p}{p-1}+ 2\gamma-1\right).\]
Then
\[\max_{X/2<x\leq 5X^{3/2}(\log X)^4}\left|\frac{E(q, 2\pi x^2)}{\sqrt x}\right|
  \gg (\log X)^{1/4} (\log_2 X)^{ (3/4)(2^{4/3}-1)}(\log_3 X)^{-3/8}.\]  
\end{thm}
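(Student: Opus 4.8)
The plan is to reduce Theorem~\ref{thm:dirichlet} to the same resonance-method machinery that powers Theorem~\ref{thm:divisor}, via the general large-values estimate of Theorem~\ref{thm:main}. The first step is to set up a truncated exponential-sum representation for $E(q, 2\pi x^2)$. Just as $\Delta(x^2)$ admits a Vorono\"i-type expansion $\frac{x^{1/2}}{\pi\sqrt 2}\sum_{n\le X^3} \frac{d(n)}{n^{3/4}}\cos(4\pi\sqrt n\, x - \pi/4) + O(X^\epsilon)$ uniformly for $\sqrt X \le x \le X^{3/2}$, there is an analogous formula for the mean-square error term. Using the Atkinson-type dissection for $\sum_{\chi \bmod q} \int_0^T |L(\tfrac12+it,\chi)|^2\,dt$ (the Dirichlet-$L$ analogue of Atkinson's formula for $E(T)$, as used by Lau and Tsang \cite{Tsang2}), I would write
\[
E(q, 2\pi x^2) = \frac{x^{1/2}}{\pi\sqrt 2}\,c_q \sum_{n\le X^{A_1}} \frac{g_q(n)}{n^{3/4}}\cos\!\left(4\pi\sqrt n\, x - \frac{\pi}{4}\right) + O(X^{1/2-\delta})
\]
uniformly for $x$ in the relevant range $\sqrt X \le x \le X^{3/2}(\log X)^4$, where $c_q>0$ is an explicit constant and $g_q(n)$ is a nonnegative multiplicative function with $g_q(n) = d(n)$ for $(n,q)=1$ (the local factors at primes dividing $q$ differ by bounded positive amounts). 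The slightly larger admissible range with $(\log X)^4$ instead of $(\log X)^2$ reflects the cruder error control for the $L$-function mean square; the extra logarithmic factors are harmless for the final bound.

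The second step is to feed this into Theorem~\ref{thm:main} with $a_n = c_q\, g_q(n) n^{-3/4}$, $\lambda_n = 4\pi\sqrt n$, and $\beta = -\pi/4$, and to choose the resonator exactly as in the proof of Theorem~\ref{thm:divisor} — the multiplicative resonator of Aistleitner--Mahatab--Munsch \cite{aist2} supported on products of primes in a dyadic-type range, tuned so that the resonance sum $\sum a_n$ over the resonating $n$ captures the factor $\sum_{n} \frac{d(n)}{n}$-type growth responsible for $(\log X)^{1/4}(\log_2 X)^{(3/4)(2^{4/3}-1)}(\log_3 X)^{-3/8}$. Because $g_q(n) \asymp_q d(n)$ and the resonating integers can be taken coprime to $q$ (at the cost of an absolutely bounded constant depending only on $q$, which is permissible since $q$ is fixed), the main term of the resonance computation is, up to the constant $c_q \prod_{p\mid q}(\text{local factor})$, identical to the divisor case. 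Thus the lower bound
\[
\max_{X/2 < x \le 5X^{3/2}(\log X)^4} \left|\frac{E(q,2\pi x^2)}{\sqrt x}\right| \gg (\log X)^{1/4}(\log_2 X)^{(3/4)(2^{4/3}-1)}(\log_3 X)^{-3/8}
\]
follows, with the implied constant depending on $q$.

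The third step is bookkeeping: one must verify that the error term $O(X^{1/2-\delta})$ in the truncated formula, after dividing by $\sqrt x$ and integrating against the resonator, is genuinely negligible compared to the main term of size $\sqrt X \cdot (\log X)^{1/4}(\log_2 X)^{c}$, which it is by a power saving; and one must confirm that Theorem~\ref{thm:main} applies with these $\lambda_n$ (the gap condition / separation of the $\lambda_n = 4\pi\sqrt n$ is the same one already used for the divisor problem). I expect the main obstacle to be establishing the truncated Atkinson-type formula for $E(q, 2\pi x^2)$ with positive coefficients and a power-saving error term uniform in the stated $x$-range — this requires care with the off-diagonal terms in $\sum_{\chi}\int |L|^2$ and with the dependence on $q$, but it is essentially contained in \cite{Tsang2} and the mechanics parallel the $\zeta$-case of Theorem~\ref{thm:riemann}. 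Once that representation is in hand, the resonance argument is a direct transcription of the proof of Theorem~\ref{thm:divisor}, since the only change is replacing $d(n)$ by the comparable nonnegative multiplicative function $g_q(n)$ and absorbing $q$-dependent constants into $\gg$.
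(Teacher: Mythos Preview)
Your proposal contains a genuine gap in its first step. The Atkinson-type formula for $E(T)$ (and likewise for the averaged Dirichlet $L$-function mean square) does \emph{not} have nonnegative coefficients: its leading sum involves $(-1)^n d(n)/n^{3/4}$, not $d(n)/n^{3/4}$. There is no expansion of the shape
\[
E(q,2\pi x^2)=\frac{x^{1/2}}{\pi\sqrt2}\,c_q\sum_{n\le X^{A_1}}\frac{g_q(n)}{n^{3/4}}\cos\!\Big(4\pi\sqrt n\,x-\tfrac\pi4\Big)+O(X^{1/2-\delta})
\]
with $g_q(n)\ge 0$ to be read off from \cite{Tsang2}; what Lau--Tsang actually produce there is an inequality comparing $|E_1(q,x)|$ with the alternating-sign sum $P(x,\tau)=\sum_{n\le\tau^2}(-1)^n d(n)n^{-3/4}\cos(4\pi\sqrt n\,x)(1-|2\sqrt n/\tau-1|)$. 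Since Theorem~\ref{thm:main} requires $a_n\ge 0$, it cannot be applied to this sum as written, and your proposed application of Theorem~\ref{thm:main} with $a_n=c_q g_q(n)n^{-3/4}$ breaks down at the outset.

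The paper handles exactly this obstruction, but indirectly. It does not attempt any new expansion for $E(q,\cdot)$; it simply quotes the Lau--Tsang inequality
\[
\max_{X/2<x\le X^{3/2}}|E_1(q,x)|\;\ge\;c'\frac{\phi(q)}{q^{3/4}}\Big(\sum_{l\mid\prod p_r}l^{-1/4}\Big)^{-1}P(x,\tau)+O(1),
\]
so that Theorem~\ref{thm:dirichlet} reduces to the lower bound for $P(x,\tau)$ already established in the proof of Theorem~\ref{thm:riemann}. That lower bound in turn does \emph{not} come from applying Theorem~\ref{thm:main} to $P$ directly; it comes from Lau--Tsang's identity \cite{Tsang} expressing the positive-coefficient sum $Q(x,\tau)=\sum d(n)n^{-3/4}\cos(4\pi\sqrt n\,x)(\cdots)$ as a finite signed combination of translates $P(\sqrt2^{\,j-i}x,\sqrt2^{\,j+i}\tau)$, applying Theorem~\ref{thm:main} to $Q$, and then pigeonholing to recover a large value of some $P$. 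This $P$-to-$Q$ device is the missing idea in your argument: without it there are no positive $a_n$ to feed into the resonance machinery, and the part of \cite{Tsang2} you cite does not supply a substitute.
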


We may achieve comparable results for the divisor problem within number fields\cite{GirNowak}\cite{nilmoni2}, the lattice counting problem associated with an n-dimensional sphere \cite{nowak}\cite{KuhNowak}\cite{nilmoni}, the counting problem for Cygan-Koranyi balls \cite{gath}.

In the following section, we will explore the method involving a generic exponential sum.
\section*{Acknowledgment}
\small{We would like to thank Prof. Eero Saksman for his valuable discussions during the preparation of this article. We also appreciate the organizers of the \href{https://nitc.ac.in/conferences-seminars-conferences/international-conference-on-lie-algebra-and-number-theory-iclant---2024---june-10th-14th-2024}{ICLANT-2024}, where the initial version of this paper was presented, which allowed us to incorporate several helpful suggestions from the audience. In particular, we are grateful to Kohji Matsumoto for recommending us to include the application to the mean square of the Riemann zeta function.


The author is supported by the DST INSPIRE Faculty Award Program and grant no. DST/INSPIRE/04/2019/002586.}
\section{Large Values of Exponential Sums}
In this section we will work with general exponential sums having positive coefficients.
Let 
\[F_\beta(x)=\sum_{n\leq X^{A_1}}a_n \cos(x\lambda_n+\beta),\]
where $\lambda_n>0$. 
The same method also works when all $\lambda_n<0$. We will also assume $a_n\geq0$ for all $n\geq 1$.
We will need the following convolution formula for $F_\beta(x)$, which is inspired from \cite{mont}.
\subsection{Convolution Formula}
\begin{lem}\label{lem:convolution_formula}
 Let $\alpha>0$ and $x\in\R$.
 Then 
 \begin{align*}
 &\int_{-\infty}^{\infty}F_\beta(x+u)\left(\frac{\sin \alpha u}{u}\right)^2e^{-i2\alpha u}\d u\\
 &=\half e^{i\beta}\sum_{\lambda_n}a_n w_{\alpha}(\lambda_n)e^{i\lambda_n x}, 
 \end{align*}
where 
\[w_{\alpha}(\lambda_n)=\frac{\pi}{2}\max\{0, \, 2\alpha-|\lambda_n-2\alpha|\}.\]
\end{lem}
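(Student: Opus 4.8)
The plan is to compute the integral on the left-hand side by plugging in the definition of $F_\beta(x+u)$ and interchanging the finite sum over $n$ with the integral over $u$, which is justified since the sum is finite and the weight $(\sin\alpha u/u)^2 e^{-i2\alpha u}$ is absolutely integrable. This reduces everything to evaluating, for each $n$, the single integral
\[
\int_{-\infty}^{\infty}\cos\bigl((x+u)\lambda_n+\beta\bigr)\left(\frac{\sin\alpha u}{u}\right)^2 e^{-i2\alpha u}\,\d u.
\]
First I would write $\cos\bigl((x+u)\lambda_n+\beta\bigr) = \half e^{i(x\lambda_n+\beta)}e^{iu\lambda_n} + \half e^{-i(x\lambda_n+\beta)}e^{-iu\lambda_n}$, so that the task becomes computing $\int_{-\infty}^{\infty}(\sin\alpha u/u)^2 e^{iu(\lambda_n-2\alpha)}\,\d u$ and $\int_{-\infty}^{\infty}(\sin\alpha u/u)^2 e^{-iu(\lambda_n+2\alpha)}\,\d u$.

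The key computational input is the classical Fourier transform of the Fej\'er kernel: $\int_{-\infty}^{\infty}(\sin\alpha u/u)^2 e^{iu\xi}\,\d u = \pi\max\{0,\,\alpha - |\xi|/2\} = \tfrac{\pi}{2}\max\{0,\,2\alpha-|\xi|\}$, the triangular ``tent'' function supported on $|\xi|\le 2\alpha$. Applying this with $\xi = \lambda_n - 2\alpha$ gives exactly $w_\alpha(\lambda_n) = \tfrac{\pi}{2}\max\{0,\,2\alpha - |\lambda_n-2\alpha|\}$, which is nonzero precisely when $0 < \lambda_n < 4\alpha$. For the second integral, $\xi = -(\lambda_n+2\alpha)$, so $|\xi| = \lambda_n + 2\alpha > 2\alpha$ (using $\lambda_n>0$), hence $\max\{0,\,2\alpha - |\xi|\} = 0$ and that term vanishes entirely. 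Collecting the surviving piece, each $n$ contributes $\half e^{i(x\lambda_n+\beta)} w_\alpha(\lambda_n) = \half e^{i\beta} a_n^{-1}\cdot a_n w_\alpha(\lambda_n) e^{i\lambda_n x}$ after multiplying by $a_n$ and summing, which is the claimed right-hand side. (I would record the Fej\'er-kernel Fourier transform either as a cited standard fact or with a one-line contour/convolution argument, e.g. writing $(\sin\alpha u/u)^2$ as the Fourier transform of the tent function and invoking Fourier inversion.)

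The only genuine subtlety — and the step I would treat most carefully — is the justification that the left-hand integral converges and that the interchange of sum and integral is legitimate; this is where one must use that $(\sin\alpha u/u)^2$ decays like $u^{-2}$ at infinity, so the integrand is $O(u^{-2})$ uniformly (the finite sum $F_\beta(x+u)$ is bounded by $\sum a_n$), making everything absolutely convergent and Fubini applicable. A secondary point worth a remark is that $w_\alpha$ as defined is automatically $0$ for $\lambda_n \ge 4\alpha$ and for $\lambda_n \le 0$, so the sum on the right is effectively restricted to $0 < \lambda_n < 4\alpha$; no separate truncation argument is needed here, though this observation is exactly what makes the lemma useful as a smoothed localization device for the resonance method in the sequel. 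I expect no serious obstacle: the lemma is an exact identity obtained from a standard Fourier pair, and the ``oscillation away'' of the $\lambda_n+2\alpha$ term is a clean consequence of positivity of the $\lambda_n$.
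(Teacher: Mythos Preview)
Your proposal is correct and follows essentially the same route as the paper: both split $\cos((x+u)\lambda_n+\beta)$ into complex exponentials, use the Fej\'er-kernel Fourier pair $\int_{-\infty}^{\infty}(\sin\alpha u/u)^2 e^{iu\xi}\,\d u=\tfrac{\pi}{2}\max\{0,2\alpha-|\xi|\}$, and observe that the term with frequency $-(\lambda_n+2\alpha)$ vanishes because $\lambda_n>0$. The paper merely packages this by introducing $F(x)=\sum a_n e^{i\lambda_n x}$ and writing the vanishing term as $w_\alpha(-\lambda_n)=0$, but the computation is identical to yours.
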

\begin{proof}
Let
\[F(x)=\sum_{n\leq X^{A_1}}a_n \exp(ix\lambda_n).\]
Then
\begin{align*}
&\quad\int_{-\infty}^{\infty}F_\beta(x+u)\left(\frac{\sin \alpha u}{u}\right)^2e^{-i2\alpha u}\d u\\
&=\frac{1}{2}\int_{-\infty}^{\infty}\left(F(x+u)e^{i\beta}+F(-x-u)e^{-i\beta}\right)\left(\frac{\sin \alpha u}{u}\right)^2e^{-i2\alpha u}\d u\\
&=\frac{1}{2}e^{i\beta}\sum_{\lambda_n}a_n w_{\alpha}(\lambda_n)e^{i\lambda_n x}+ \frac{1}{2}e^{-i\beta}\sum_{\lambda_n}a_n w_{\alpha}(-\lambda_n)e^{-i\lambda_n x}.
\end{align*}
As $w_{\alpha}(-\lambda_n)=0$ for all $n\geq 1$,
\[\int_{-\infty}^{\infty}F_\beta(x+u)\left(\frac{\sin \alpha u}{u}\right)^2e^{-i2\alpha u}\d u=\frac{1}{2}e^{i\beta}\sum_{\lambda_n}a_n w_{\alpha}(\lambda_n)e^{i\lambda_n x}.\]
\end{proof}

We may note that the above convolution formula makes the phase $\beta$ redundant in computation of large values. Further, it also gives the flexibility to control the length of the exponential sum by choosing $\alpha$ such that the sum runs upto $\lambda_n\leq 2\alpha$. 
\subsection{Construction of Resonator}
The convolution of $F_\beta$ in Lemma~\ref{lem:convolution_formula} concentrates the values of $F_\beta$ in a small neighbourhood of $x$. Next, we need to choose suitable $\lambda_n$s to obtain large values in a neighbourhood of $x$. We will do this using the resonance method.

Let $0<A_4<A_3<A_2<A_1$. Further, let $Y_1= X^{A_3}$ and $Y_2=X^{A_2}$.
To find extreme values of $F_\beta(x)$ for $Y_1/2\leq x\leq 2Y_2(\log Y_2)^2$, 
we define
\begin{align*}
I_1&=\int_{-\infty}^{\infty}\int_ {-X^{A_4}}^{X^{A_4}}F_0(x+u)\left(\frac{\sin \alpha u}{u}\right)^2e^{-i2\alpha u}|R(x)|^2\Phi(x/Y_2)\d u\d x, \text{ and }\\
I_2&= \int_{-\infty}^{\infty} |R(x)|^2\Phi(x/Y_2)\d x,
\end{align*}
where $\Phi(x):=e^{-x^2/2}$. In the resonance method, we use the idea that
\[\frac{|I_1|}{|I_2|}\leq\max_{Y_1/2\leq x\leq 2Y_2\log^2Y_2}|F_0(x)|+ \text{Error}.\]
The transition from $0$ to $\beta$ can be done using Lemma~\ref{lem:convolution_formula} and Lemma~\ref{lem:Fbeta_to_F0}. Infact, the large values we will obtain, will be independent of $\beta$.


To construct the resonator $R(x)$, we will collect the frequencies $e^{i\lambda_n x}$. Let 
\[\M\subseteq \{\lambda_n: C_1\alpha\leq\lambda_n\leq 2\alpha\},\]
for some $0<C_1<2$, such that $\M$ is a linearly independent set over the rationals $\Q$ and let $|\M|=M$.
We define a resonator $R(x)$ on $\M$ as 
\[R(x)=\sum_{u \in \mathbb{N}[\mathcal{M}]} r(u)\exp(iu x),\]
where\footnote{ It is useful to observe that we could define $0<r(\lambda_n)<1$ arbitrarily for $\lambda_n\in \M$, and then extend  to the rest of $\N[\M]$ by just demanding that $r(u+v)=r(u)r(v)$ for all $u, v \in \N[\M]$. This property of our resonator may help to extend it to combinatorial and algebraic setting.} $r(u):=e^{-u/2\alpha}$. Above $\mathbb{N}[\mathcal{M}]$ stands for the set of non-negative integral linear combinations of elements in $\mathcal{M}$.

Due to Linear independence of $\M$ and as $r(u+v)=r(u)r(v)$ for all $u, v \in \M$, we may write $R(x)$ as an Euler product
\begin{equation}\label{eq:euler}
R(x)=\prod_{\lambda_n\in\M}\left(1-r(\lambda_n)\exp(i\lambda_n x)\right)^{-1}.
\end{equation}

Now we are ready to state our main theorem.

\begin{thm}\label{thm:main}
We use the notations as defined before. Then 
\begin{align*}
 &\max_{X^{A_3}/2<x\leq 2A_2^2X^{A_2}(\log X)^2}|F_\beta(x)|\\
 &\geq \frac{\pi}{4e}\sum_{\lambda_n\in \M}a_n+ O\left(X^{A_3-A_2}e^{2M/C_1}\left(\sum_{\lambda_n\leq 4\alpha}a_n\right) + \frac{X^{-A_4}}{\alpha}\left(\sum_{n\leq X^{A_1}}a_n\right)\right).
\end{align*}
\end{thm}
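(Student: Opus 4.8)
The plan is to estimate the ratio $|I_1|/|I_2|$, where $I_1$ and $I_2$ are the integrals defined just before the theorem statement, and to extract from this estimate both the main term $\frac{\pi}{4e}\sum_{\lambda_n\in\M}a_n$ and the error terms. The structural backbone is the standard resonance inequality
\[
\frac{|I_1|}{|I_2|}\;\le\;\max_{X^{A_3}/2<x\le 2A_2^2X^{A_2}(\log X)^2}|F_0(x)|\;+\;\text{(errors)},
\]
which holds because $|R(x)|^2\Phi(x/Y_2)$ is a nonnegative weight whose mass is concentrated, up to a negligible tail, on the interval where $x\asymp Y_2=X^{A_2}$ (since $\Phi(x/Y_2)=e^{-x^2/(2Y_2^2)}$ and the relevant $x$-range is $Y_1/2\le x\le 2Y_2(\log Y_2)^2$ with $Y_1=X^{A_3}<Y_2$). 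The first step, then, is to make this localization rigorous: bound the contribution to $I_1$ and $I_2$ from $x$ outside $[Y_1/2,\,2Y_2(\log Y_2)^2]$ using the Gaussian decay of $\Phi$ and a crude bound $|F_0(x+u)|\le\sum_{n\le X^{A_1}}a_n$, trivially $|R(x)|\le\prod(1-r(\lambda_n))^{-1}\le e^{2M/C_1}$ from the Euler product \eqref{eq:euler} together with $r(\lambda_n)=e^{-\lambda_n/2\alpha}\le e^{-C_1/2}$; and also bound the $u$-integral outside $[-X^{A_4},X^{A_4}]$, where $(\sin\alpha u/u)^2\le u^{-2}$ gives a tail of size $\asymp X^{-A_4}/\alpha$ times $\sum a_n$ — this is the source of the second error term.

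The core computation is the evaluation of $I_1$. First I would extend the inner $u$-integral from $[-X^{A_4},X^{A_4}]$ to all of $\R$ at the cost of the $X^{-A_4}/\alpha$ error just mentioned, then apply Lemma~\ref{lem:convolution_formula} (with $\beta=0$) to the inner integral, obtaining
\[
I_1=\tfrac12\int_{-\infty}^{\infty}\Big(\sum_{\lambda_n}a_n w_\alpha(\lambda_n)e^{i\lambda_n x}\Big)|R(x)|^2\Phi(x/Y_2)\,\d x\;+\;(\text{errors}).
\]
Expanding $|R(x)|^2=\sum_{u,v\in\N[\M]}r(u)r(v)e^{i(u-v)x}$ and integrating against $\Phi(x/Y_2)$ produces $\widehat\Phi$ evaluated at $Y_2(\lambda_n+u-v)$; since $\widehat\Phi$ also has Gaussian decay and $Y_2$ is large, the diagonal $v=u+\lambda_n$ dominates. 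Here linear independence of $\M$ over $\Q$ is what guarantees that $v=u+\lambda_n$ has a unique solution (or none) in $\N[\M]$ for each $\lambda_n\in\M$, and the multiplicativity $r(u+\lambda_n)=r(u)r(\lambda_n)$ lets the resulting diagonal sum factor as $\big(\sum_{u}r(u)^2\big)r(\lambda_n)w_\alpha(\lambda_n)$, i.e. exactly $I_2'\cdot r(\lambda_n)w_\alpha(\lambda_n)$ up to the $\Phi$-normalization, where $I_2'$ is the analogue of $I_2$. The off-diagonal terms, controlled by the rapid decay of $\widehat\Phi$ and a counting of near-solutions, contribute to the first error term $X^{A_3-A_2}e^{2M/C_1}\sum_{\lambda_n\le 4\alpha}a_n$ (the factor $X^{A_3-A_2}=Y_1/Y_2$ being the gain from Gaussian concentration, and $e^{2M/C_1}$ bounding $\sum r(u)$-type quantities relative to $\sum r(u)^2$).

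Dividing by $I_2$ then yields $|I_1|/|I_2|\ge \tfrac12\sum_{\lambda_n\in\M}a_n\,r(\lambda_n)\,\frac{w_\alpha(\lambda_n)}{\text{(norm)}}$ plus errors. Since $C_1\alpha\le\lambda_n\le 2\alpha$ for $\lambda_n\in\M$, we have $r(\lambda_n)=e^{-\lambda_n/2\alpha}\ge e^{-1}$, and $w_\alpha(\lambda_n)=\frac{\pi}{2}\max\{0,2\alpha-|\lambda_n-2\alpha|\}=\frac{\pi}{2}(2\alpha-(2\alpha-\lambda_n))\cdot(\text{sign check})$; one checks that on the range $C_1\alpha\le\lambda_n\le 2\alpha$ the weight $w_\alpha(\lambda_n)=\frac{\pi}{2}\lambda_n$ when $\lambda_n\le 2\alpha$, which after dividing by a normalizing $\alpha$-factor and using $\lambda_n\ge C_1\alpha$ together with the $e^{-1}$ gives the clean constant $\frac{\pi}{4e}$ in front of $\sum_{\lambda_n\in\M}a_n$ (the precise bookkeeping of the $w_\alpha$ normalization against the factor $\tfrac12$ and the $\Phi$-integral is routine but must be done carefully to land exactly on $\frac{\pi}{4e}$). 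Finally, $\max|F_0(x)|$ is related to $\max|F_\beta(x)|$ via Lemma~\ref{lem:Fbeta_to_F0} so the bound transfers to $F_\beta$, and the $x$-range $2A_2^2X^{A_2}(\log X)^2 = 2Y_2(\log Y_2)^2$ comes out of where the weight lives.

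The main obstacle, I expect, is the off-diagonal analysis in $I_1$: one must show that the only non-negligible contributions to $\sum_{u,v\in\N[\M]}r(u)r(v)\,\widehat\Phi\big(Y_2(\lambda_n+u-v)\big)$ come from exact diagonal solutions, and that everything else is absorbed into $X^{A_3-A_2}e^{2M/C_1}\sum_{\lambda_n\le 4\alpha}a_n$. This requires (i) using linear independence over $\Q$ to control how close $\lambda_n+u-v$ can come to $0$ without being $0$ — effectively a lower bound on $|\lambda_n+u-v|$ when nonzero, which in this setup is forced by the structure of $\N[\M]$ and the sieve-type restriction $u,v$ built from frequencies of size $\asymp\alpha$ — and (ii) summing the tails of $\widehat\Phi$ against the (potentially large) number of lattice points $u,v$, where the bound $\sum_{u\in\N[\M]}r(u)\le\prod_{\lambda_n\in\M}(1-r(\lambda_n))^{-1}\le e^{2M/C_1}$ is the key input. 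Keeping the two error terms in the stated form, rather than a messier combination, is where the care lies; everything else follows the now-classical resonance template.
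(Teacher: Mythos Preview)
Your overall architecture is correct—apply the convolution formula, expand $|R(x)|^2$, and compare $I_1$ with $I_2$—but you have misidentified both the main obstacle and the source of the first error term.

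The step you flag as ``the main obstacle'', namely controlling the off-diagonal contribution
\[
\sum_{u,v}r(u)r(v)\,\widehat\Phi\big(Y_2(\lambda_n+u-v)\big)
\]
via a lower bound on $|\lambda_n+u-v|$ when this quantity is nonzero, is not how the paper proceeds and would in fact be problematic: linear independence of $\M$ over $\Q$ gives \emph{no} quantitative lower bound on nonzero integer combinations of the $\lambda_n$ (think of $\lambda_n=4\pi\sqrt n$ in the applications), so there is no reason for $|\lambda_n+u-v|\gg Y_2^{-1}$. The paper sidesteps this entirely by exploiting \emph{positivity}: the Fourier transform of the Gaussian $\Phi$ is everywhere positive, hence every term
\[
a_n\,w_\alpha(\lambda_n)\,r(u)\,r(v)\int_{-\infty}^{\infty} e^{i(\lambda_n+u-v)x}\Phi(x/Y_2)\,\d x
\]
is nonnegative. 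One may therefore simply \emph{drop} all but the terms with $\lambda_n\in\M$ and then substitute $v\mapsto v+\lambda_n$, at zero cost, arriving directly at
\[
|I_1|\;\ge\;\Big(\tfrac12\sum_{\lambda_n\in\M}a_n\,w_\alpha(\lambda_n)\,r(\lambda_n)\Big)\,I_2.
\]
This is the key device imported from \cite{aist2}, and it is the reason the resonator is built to be completely multiplicative and the smoothing is Gaussian. No off-diagonal estimation is needed at all.

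Correspondingly, the first error term $X^{A_3-A_2}e^{2M/C_1}\sum_{\lambda_n\le4\alpha}a_n$ does not arise from any frequency-side off-diagonal analysis. It comes from the localization of the outer $x$-integral: when one restricts $I_1$ to $Y_1\le|x|\le Y_2(\log Y_2)^2$ so that $x+u$ lands in the stated range, the discarded central piece $\int_{-Y_1}^{Y_1}$ is bounded trivially by
\[
Y_1\cdot\max_x|R(x)|^2\cdot\Big(\alpha\sum_{\lambda_n\le4\alpha}a_n\Big),
\]
using Lemma~\ref{lem:convolution_formula} to control the inner $u$-integral and $|R(x)|^2\le e^{2M/C_1}$. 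Division by $\alpha I_2\gg \alpha Y_2\,e^{M/7}$ then yields the factor $X^{A_3-A_2}=Y_1/Y_2$. So the saving comes from truncating the $x$-range near the origin, not from Gaussian decay in the frequency variable as you suggest.
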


\begin{rmk}
In the above theorem, the constant $\frac{\pi}{4e}$ is not optimal, and its optimization is left for future research.
For example, it can be improved to $\pi/(4e^{C_2/2})$ by choosing $\M\subseteq [C_1\alpha, C_2 \alpha]$.
\end{rmk}

\section{Proof of Theorem~\ref{thm:main}}

We start by proving the following two bounds concerning  $|R(x)|^2$.

\begin{lem}\label{lem:simplify0} 
Consider $I_1$ and $I_2$ as defined above. \\
\noindent  {\rm (i)}\quad For any $x\in \R$  we have 
$$|R(x)|^2\leq \exp\left(\frac{2}{C_1}M\right),$$ and

\smallskip

\noindent {\rm (ii)}\quad $$I_2\geq \sqrt{2\pi}Y_2\exp\left(\frac{M}{7}\right).$$
\end{lem}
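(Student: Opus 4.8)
The plan is to derive both bounds from the Euler product \eqref{eq:euler} together with two elementary one-variable inequalities.

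For part (i), I would begin from \eqref{eq:euler} and apply the pointwise estimate $|1-r(\lambda_n)e^{i\theta}|\ge 1-r(\lambda_n)$, valid for all real $\theta$, to get
\[
|R(x)|^2=\prod_{\lambda_n\in\M}\frac{1}{|1-r(\lambda_n)e^{i\lambda_n x}|^2}\le\prod_{\lambda_n\in\M}\frac{1}{(1-r(\lambda_n))^2}.
\]
Absolute convergence of these products is immediate since $r(\lambda_n)=e^{-\lambda_n/2\alpha}\le e^{-C_1/2}<1$ by $\lambda_n\ge C_1\alpha$. Taking logarithms, it then suffices to show $\log\frac{1}{1-r(\lambda_n)}\le\frac{1}{C_1}$ for each $n$; since $t\mapsto(1-t)^{-1}$ is increasing and $r(\lambda_n)\le e^{-C_1/2}$, this reduces to the numerical fact that $1-e^{-C_1/2}\ge e^{-1/C_1}$ for $0<C_1<2$. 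Summing the $M$ resulting terms gives $|R(x)|^2\le e^{2M/C_1}$.

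For part (ii), I would expand the square as
\[
|R(x)|^2=\sum_{u,v\in\N[\M]}r(u)r(v)e^{i(u-v)x},
\]
note that the uniform bound from part (i) (equivalently, $\sum_{u\in\N[\M]}r(u)=\prod_{\lambda_n\in\M}(1-r(\lambda_n))^{-1}<\infty$) justifies integrating term by term against $\Phi(x/Y_2)=e^{-x^2/(2Y_2^2)}$, and use the Gaussian Fourier integral $\int_{-\infty}^{\infty}e^{i\xi x}e^{-x^2/(2Y_2^2)}\d x=\sqrt{2\pi}\,Y_2\,e^{-\xi^2 Y_2^2/2}$ to obtain
\[
I_2=\sqrt{2\pi}\,Y_2\sum_{u,v\in\N[\M]}r(u)r(v)e^{-(u-v)^2Y_2^2/2}.
\]
Every term here is nonnegative, so I would discard the off-diagonal ones and resum the diagonal via linear independence of $\M$ (as in \eqref{eq:euler}), giving
\[
I_2\ge\sqrt{2\pi}\,Y_2\sum_{u\in\N[\M]}r(u)^2=\sqrt{2\pi}\,Y_2\prod_{\lambda_n\in\M}\frac{1}{1-r(\lambda_n)^2}.
\]
Since $\lambda_n\le 2\alpha$ forces $r(\lambda_n)^2=e^{-\lambda_n/\alpha}\ge e^{-2}$, each factor is at least $(1-e^{-2})^{-1}$, and the numerical fact $\log\frac{1}{1-e^{-2}}\ge\frac{1}{7}$ yields $I_2\ge\sqrt{2\pi}\,Y_2\,e^{M/7}$.

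I expect the only genuine friction to lie in the two explicit numerical inequalities $1-e^{-C_1/2}\ge e^{-1/C_1}$ on $(0,2)$ and $\log\frac{1}{1-e^{-2}}\ge\frac{1}{7}$: these are exactly what produce the clean exponents $2/C_1$ and $1/7$ in the statement, and since both are fairly tight near their extreme points they warrant a careful, if routine, check. The analytic ingredients — the Gaussian integral and the Fubini interchange — are standard and are underwritten by the uniform bound in part (i), so they present no real obstacle.
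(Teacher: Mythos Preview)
Your proposal is correct and follows essentially the same route as the paper: both parts proceed via the Euler product \eqref{eq:euler}, bounding each factor using $\lambda_n\in[C_1\alpha,2\alpha]$ and reducing to the two numerical inequalities you identify. If anything, your write-up is slightly more careful than the paper's, since you make explicit the Gaussian Fourier integral that underlies the positivity of every term in the $I_2$ expansion and you justify the term-by-term integration via the bound from part (i).
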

\begin{proof} Towards (i) we get
\begin{align*}
|R(x)|^2&
\leq\prod_{\lambda_n\in\M }\left(1-r(\lambda_n)\right)^{-2}\\
&\leq\exp\left(-2\sum_{\lambda_n\in \M}\log(1-e^{-\lambda_n/2\alpha})\right)\\
&\leq\exp\left(-2\sum_{\lambda_n\in \M}\log(1-e^{-C_1/2})\right)\leq\exp\left(\frac{2}{C_1}M\right). 
\end{align*}

Note
\[\exp\left(-2M\log(1-e^{-C_1/2})\right)\leq \exp\left(\frac{2M}{e^{C_1/2}}\right)\leq\exp\left(\frac{2M}{1+C_1/2}\right)\leq \exp\left(\frac{2M}{C_1}\right). \]

To show (ii), we expand the integral for $I_2$ as follows

\begin{align*}
 I_2=\int^\infty_{-\infty}\sum_{u\in \N[\M]}\sum_{v\in \N[\M]}r(u)r(v)e^{i(u-v) x}\Phi(x/Y_2)\d x.\\
 \end{align*}

We drop the terms where $u\neq v$, as each term in the above sum is positive. So we get

\begin{align*}
 I_2&\geq \sum_{u\in \N[\M]} r(u)^2 \int_{-\infty}^{\infty}\Phi(x/Y_2)\d x\\
 &\geq \sqrt{2\pi}Y_2\sum_{u\in \N[\M]} r(u)^2\\
 & \geq \sqrt{2\pi}Y_2 \prod_{{\lambda}_n}(1-e^{-{\lambda}_n/\alpha})^{-1}\geq \sqrt{2\pi}Y_2 \prod_{{\lambda}_n}(1-e^{-2})^{-1}
 \geq \sqrt{2\pi}Y_2\exp\left(\frac{M}{7}\right).
\end{align*}

\end{proof}

From now on, we will work with $F_0(x)$ and relate it to $F_\beta(x)$ in the proof of Theorem~\ref{thm:main} later, using Lemma~\ref{lem:convolution_formula} and Lemma~\ref{lem:Fbeta_to_F0}.

In the following lemma, we use the resonator to obtain a lower bound for $\frac{I_1}{I_2}$. Later in Lemma~\ref{lem:simplifyI1}, we simplify $I_1$ by bounding its tail part.

\begin{lem}\label{lem:I1byI2}
Let $I_1$ and $I_2$ be as before.  Then 
\[\left|\frac{I_1}{I_2}\right| \geq \half \sum_{\lambda_n\in\M}a_n r(\lambda_n)w_{\alpha}(\lambda_n) +O\left(X^{-A_4}\sum_{n\leq X^{A_1}} a_n\right).\] 
\end{lem}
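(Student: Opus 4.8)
The plan is to compute $I_1$ by interchanging the $u$-integral, the $x$-integral, and the summation defining $|R(x)|^2$, and then isolate the diagonal contribution. First I would write $|R(x)|^2 = \sum_{u,v\in\N[\M]} r(u)r(v)e^{i(u-v)x}$ and, for each fixed pair $(u,v)$, apply Lemma~\ref{lem:convolution_formula} to the inner $u$-integral — except that here the $u$-integral is truncated to $[-X^{A_4},X^{A_4}]$ rather than over all of $\R$. So the first real step is to replace the truncated integral by the full integral at the cost of an error: the tail $\int_{|u|>X^{A_4}} F_0(x+u)(\sin\alpha u/u)^2 e^{-2i\alpha u}\,\d u$ is bounded using $|F_0|\le \sum_{n\le X^{A_1}} a_n$ and $\int_{|u|>X^{A_4}} u^{-2}\,\d u \ll X^{-A_4}$, which after multiplying by $|R(x)|^2\Phi(x/Y_2)$ and integrating in $x$ (using $I_2$ as the normalizing factor, or rather bounding $\int |R(x)|^2\Phi(x/Y_2)\,\d x = I_2$) produces the claimed $O(X^{-A_4}\sum_{n\le X^{A_1}} a_n)$ term after dividing by $I_2$. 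One must be slightly careful that $\alpha$ is comparable to a power of $X$ so the $1/\alpha$ factor from Lemma~\ref{lem:convolution_formula} is harmless or can be absorbed; I would check that the normalization is set up so this is consistent with the stated bound.

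Next, with the full $u$-integral in hand, Lemma~\ref{lem:convolution_formula} turns the inner integral into $\tfrac12 \sum_{\lambda_n} a_n w_\alpha(\lambda_n) e^{i\lambda_n x}$ (here $\beta=0$). Plugging this back, $I_1$ becomes
\[
I_1 = \tfrac12 \sum_{\lambda_n} a_n w_\alpha(\lambda_n) \sum_{u,v\in\N[\M]} r(u)r(v)\int_{-\infty}^\infty e^{i(\lambda_n + u - v)x}\Phi(x/Y_2)\,\d x + (\text{tail error}).
\]
Now I would keep only the terms with $v = u + \lambda_n$ where $\lambda_n\in\M$: for these, $e^{i(\lambda_n+u-v)x}=1$, the $x$-integral equals $\sqrt{2\pi}Y_2$, and $r(u)r(u+\lambda_n) = r(\lambda_n) r(u)^2$ by multiplicativity of $r$, so summing over $u\in\N[\M]$ gives $r(\lambda_n)\sum_{u} r(u)^2 = r(\lambda_n)\cdot \big(I_2/(\sqrt{2\pi}Y_2)\big)$. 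Thus the diagonal contribution to $I_1$ is exactly $\tfrac12 I_2 \sum_{\lambda_n\in\M} a_n r(\lambda_n) w_\alpha(\lambda_n)$, which upon dividing by $I_2$ yields the main term of the lemma. The remaining (off-diagonal) terms $\lambda_n + u - v \ne 0$ — including $\lambda_n\notin\M$, or $\lambda_n\in\M$ but $v\ne u+\lambda_n$ — contribute a genuine Gaussian integral $\sqrt{2\pi}Y_2\, e^{-Y_2^2(\lambda_n+u-v)^2/2}$, which I must argue is negligible.

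The main obstacle is controlling these off-diagonal terms. By linear independence of $\M$ over $\Q$, whenever $\lambda_n+u-v \ne 0$ the frequency gap $|\lambda_n+u-v|$ cannot be arbitrarily small for $\lambda_n\in\M$ (it's a nonzero rational-type combination), and for $\lambda_n\notin\M$ one needs a spacing hypothesis on the $\lambda_n$'s relative to $\N[\M]$; in either case the Gaussian $e^{-Y_2^2 \delta^2/2}$ with $\delta$ the minimal gap and $Y_2=X^{A_2}$ a large power of $X$ decays faster than any polynomial, and — after summing against $r(u)r(v)$, whose total mass $\sum r(u)^2$ is controlled by Lemma~\ref{lem:simplify0}(i)/(ii) times at most $e^{O(M)}$ — the whole off-diagonal sum is dwarfed by both the main term and the tail error term. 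I would either cite the relevant spacing argument (as in \cite{aist2} or \cite{So}) or, if the $\lambda_n$ are separated on the scale of $1/Y_2$, give the short direct estimate; this is where the constants $A_2, A_3, A_4$ and the choice of the window $[C_1\alpha, 2\alpha]$ for $\M$ get used, and where I expect the bookkeeping to be heaviest. Once the off-diagonal sum is absorbed into the error, the lemma follows.
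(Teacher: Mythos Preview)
Your argument has a genuine gap. The identity you use, $\sum_{u\in\N[\M]} r(u)^2 = I_2/(\sqrt{2\pi}Y_2)$, is false: since $\widehat\Phi>0$, the off-diagonal terms in $I_2 = \sum_{u,v} r(u)r(v)\int e^{i(u-v)x}\Phi(x/Y_2)\,\d x$ are strictly positive, so only $\sqrt{2\pi}Y_2\sum_u r(u)^2 \le I_2$ holds, and your ``diagonal contribution'' falls short of $\tfrac12 I_2\sum_{\lambda_n\in\M}a_n r(\lambda_n)w_\alpha(\lambda_n)$. To repair this you propose showing the off-diagonal terms (in both $I_1$ and $I_2$) are negligible via a spacing argument, but the only hypothesis on $\M$ is linear independence over $\Q$, which gives \emph{no} quantitative lower bound on $|\lambda_n+u-v|$ for $u,v$ ranging over the infinite semigroup $\N[\M]$. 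Neither \cite{aist2} nor \cite{So} supplies such an estimate; indeed the whole point of the device in \cite{aist2}, which the paper cites as its model for exactly this step, is to avoid one.

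The key observation you are missing is that every term in the triple sum
\[
\half\sum_{\lambda_n}a_n w_\alpha(\lambda_n)\sum_{u,v\in\N[\M]} r(u)r(v)\int_{-\infty}^{\infty} e^{i(\lambda_n+u-v)x}\Phi(x/Y_2)\,\d x
\]
is nonnegative (positive Gaussian transform, nonnegative coefficients), so terms may be discarded freely to obtain a lower bound---no estimation of what is discarded is required. The paper restricts to $\lambda_n\in\M$, substitutes $v=v'+\lambda_n$, and factors out $r(\lambda_n)$ via $r(v'+\lambda_n)=r(\lambda_n)r(v')$; since $\lambda_n\in\M$, the new index set $\{v':v'+\lambda_n\in\N[\M]\}$ contains all of $\N[\M]$, and restricting $v'$ back to $\N[\M]$ (again by positivity) leaves the double sum over $u,v'\in\N[\M]$ equal to \emph{exactly} $I_2$. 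This reproduces the full $I_2$ in the numerator rather than merely its diagonal part, and the lemma follows immediately.
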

\begin{proof}
Consider
\begin{align} \label{eq:tail_conv_formula}
\notag
 &\left|\int_{-\infty}^{\infty}\int_ {X^{A_4}}^{\infty}F_0(x+u)\left(\frac{\sin \alpha u}{u}\right)^2e^{-i2\alpha u}|R(x)|^2\Phi(x/Y_2)\d u\d x\right|\\
 \notag
 &\ll \left(\sum_{n\leq X^{A_1}} a_n\int_ {X^{A_4}}^{\infty}\left|\frac{\sin \alpha u}{u}\right|^2\d u\right)\int_{-\infty}^{\infty}|R(x)|^2\Phi(x/Y_2)\d x\\
 &\ll \left(X^{-A_4}\sum_{n\leq X^{A_1}} a_n\right) I_2.
\end{align}

Similarly 
\begin{align*}
 &\left|\int_{-\infty}^{\infty}\int^ {-X^{A_4}}_{-\infty}F_0(x+u)\left(\frac{\sin \alpha u}{u}\right)^2e^{-i2\alpha u}|R(x)|^2\Phi(x/Y_2)\d u\d x\right|\\
 &\ll \left(X^{-A_4}\sum_{n\leq X^{A_1}} a_n\right) I_2.
\end{align*}

Using the above inequalities and Lemma~\ref{lem:convolution_formula}, we obtain 
\begin{align*}
 |I_1|
 &=\left|\int_{-\infty}^{\infty}\int_ {-\infty}^{\infty}F_0(x+u)\left(\frac{\sin \alpha u}{u}\right)^2e^{-i2\alpha u}|R(x)|^2\Phi(x/Y_2)\d u\d x \right|\\
 &+O\left(I_2X^{-A_4}\sum_{n\leq X^{A_1}} a_n\right)\\
 &= \left|\half \sum_{\lambda_n}a_n w_{\alpha}(\lambda_n)\int_{-\infty}^{\infty}|R(x)|^2\Phi(x/Y_2)e^{i\lambda_n x}\d x \right|+O\left(I_2X^{-A_4}\sum_{n\leq X^{A_1}} a_n\right).\\
 \end{align*}
 
 To achieve our desired result, we will simplify the sum provided above and incorporate $I_2$ within it. Starting from the second line, we can omit the absolute value, since each term under the integral is positive. This technique helps us to drop as many terms as we need to simplify our calculation. This approach is inspired by \cite{aist2}, where a similar method is employed to derive large values of $\zeta(1+it)$.
 
 \begin{align*}
 &\left|\half \sum_{\lambda_n}a_n w_{\alpha}(\lambda_n)\int_{-\infty}^{\infty}|R(x)|^2\Phi(x/Y_2)e^{i\lambda_n x}\d x \right|\\
 &= \half \sum_{\lambda_n}a_n w_{\alpha}(\lambda_n)\sum_{u\in \N[\M]}\sum_{v\in \N[\M]}r(u)r(v)\int_{-\infty}^{\infty}e^{i(\lambda_n+u-v) x}\Phi(x/Y_2)\d x\\
 &\geq\half \sum_{\lambda_n\in \M}a_n w_{\alpha}(\lambda_n)\sum_{u\in \N[\M]}\sum_{v+\lambda_n\in \N[\M]}r(u)r(v+\lambda_n)\int_{-\infty}^{\infty}e^{i(u-v) x}\Phi(x/Y_2)\d x\\
 & = \half \sum_{\lambda_n\in \M}a_n w_{\alpha}(\lambda_n)r(\lambda_n)\sum_{u\in \N[\M]}\sum_{v\in \N[\M]}r(u)r(v)\int_{-\infty}^{\infty}e^{i(u-v) x}\Phi(x/Y_2)\d x\\
 & = \left(\half \sum_{\lambda_n\in \M}a_n w_{\alpha}(\lambda_n)r(\lambda_n)\right)I_2.
\end{align*}

\end{proof}

\begin{rmk}
 In the above analysis of $I_1$ and $I_2$, we may realize that $I_2$ is a positive real number and $I_1$ is `almost' a positive real number except for a small error. So the bound in Lemma~[\ref{lem:I1byI2}]  holds without the absolute value.
\end{rmk}

The parts of $I_1$ contributing to small values are estimated in the error term in the following lemma.
\begin{lem}\label{lem:simplifyI1}
We have
\begin{align*}
 I_1&=\left(\int_{Y_1}^{Y_2(\log Y_2)^2}+\int_{-Y_2(\log Y_2)^2}^{-Y_1}\right)\int_ {-X^{A_4}}^{X^{A_4}}F_0(x+u)\left(\frac{\sin \alpha u}{u}\right)^2e^{i2\alpha u}|R(x)|^2\Phi\left(\frac{x}{Y_2}\right)\d u\d x \\
 &+ O\left( \alpha  X^{A_3} e^{2M/C_1}\left(\sum_{\lambda_n\leq 4\alpha}a_n\right) + I_2X^{-A_4}\left(\sum_{n\leq X^{A_1}}a_n\right)\right).
\end{align*} 
\end{lem}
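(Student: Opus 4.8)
The plan is to start from the definition of $I_1$ and split the $x$-integral into the ``central'' range where $|x|$ lies in $[Y_1, Y_2(\log Y_2)^2]$ and the two complementary tails: the inner tail $|x| < Y_1$ and the outer tail $|x| > Y_2(\log Y_2)^2$. Since the $u$-integral has already been truncated to $|u| \leq X^{A_4}$ in the definition of $I_1$ (and $X^{A_4}$ is much smaller than $Y_1 = X^{A_3}$), controlling these two tails in the $x$-variable will localize the whole double integral to the region claimed in the statement. The main point is that the error produced by discarding these two tails must be absorbed into the stated $O\big(\alpha X^{A_3} e^{2M/C_1}\sum_{\lambda_n \leq 4\alpha} a_n + I_2 X^{-A_4}\sum_{n\leq X^{A_1}} a_n\big)$.

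For the \emph{inner tail} $|x|\leq Y_1$, I would bound $|F_0(x+u)|$ trivially by $\sum_{\lambda_n \leq 2\alpha} a_n$ — actually by $\sum_{\lambda_n \leq 4\alpha} a_n$ to have room, since only frequencies up to $2\alpha$ appear after the convolution but it is cleaner to quote the coarser sum — then use Lemma~\ref{lem:simplify0}(i) to replace $|R(x)|^2$ by $e^{2M/C_1}$, use $\left(\frac{\sin\alpha u}{u}\right)^2 \leq \alpha^2$ and integrate $u$ over an interval of length $2X^{A_4}$, and finally integrate $\Phi(x/Y_2) \leq 1$ over $|x|\leq Y_1 = X^{A_3}$, producing a contribution $\ll \alpha^2 X^{A_4} X^{A_3} e^{2M/C_1}\sum_{\lambda_n\leq 4\alpha} a_n$. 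I would want to check that $\alpha^2 X^{A_4} \ll \alpha$, i.e. that $\alpha X^{A_4}$ is bounded (this will hold from the parameter choices made when the theorem is applied, since $\alpha$ is essentially $X^{A_2}$-ish but $A_4$ is chosen tiny; if the book-keeping is slightly off one instead keeps $\alpha^2 X^{A_4}$ and notes it is still dominated), so that this matches the first error term $\alpha X^{A_3} e^{2M/C_1}\sum_{\lambda_n\leq 4\alpha} a_n$.

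For the \emph{outer tail} $|x| > Y_2(\log Y_2)^2$, I would exploit the Gaussian decay of $\Phi$: on this range $\Phi(x/Y_2) = e^{-x^2/2Y_2^2} \leq e^{-(\log Y_2)^4/2}$, which beats any power of $X$. Here I would bound $|F_0(x+u)| \leq \sum_{n\leq X^{A_1}} a_n$, bound $|R(x)|^2 \leq e^{2M/C_1}$ by Lemma~\ref{lem:simplify0}(i) again, bound $\left(\frac{\sin\alpha u}{u}\right)^2 \leq \alpha^2$ and integrate over $|u|\leq X^{A_4}$, and then integrate the rapidly-decaying Gaussian over $|x| > Y_2(\log Y_2)^2$; the resulting quantity is $\ll \alpha^2 X^{A_4} Y_2 e^{-(\log Y_2)^4/2} e^{2M/C_1}\sum_{n\leq X^{A_1}} a_n$, and since $M \ll$ a small power of $\log X$ while $e^{-(\log Y_2)^4/2}$ decays super-polynomially, this is smaller than, say, $I_2 X^{-A_4}\sum_{n\leq X^{A_1}} a_n$ (recall $I_2 \gg \sqrt{2\pi}\,Y_2 e^{M/7}$ from Lemma~\ref{lem:simplify0}(ii)), so it is absorbed into the second error term. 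Combining the two tail estimates with the definition of $I_1$ gives exactly the claimed identity; one should double-check the sign of the exponential $e^{i2\alpha u}$ versus $e^{-i2\alpha u}$ (the statement writes $e^{i2\alpha u}$ inside the localized integral, which just reflects that the retained piece will be conjugated/combined later and the tail bounds are insensitive to this sign).

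The step I expect to be the main obstacle is the book-keeping of the \emph{inner tail}: one must be careful that the factor $\alpha^2 X^{A_4}$ coming from $(\sin\alpha u/u)^2 \leq \alpha^2$ integrated over $|u|\leq X^{A_4}$ really collapses to a single power of $\alpha$, i.e. that $\alpha X^{A_4} = O(1)$ under the eventual choice of parameters — otherwise the first error term as stated ($\alpha X^{A_3} e^{2M/C_1}\sum_{\lambda_n\leq 4\alpha} a_n$, linear in $\alpha$) would not be large enough to absorb it. An alternative, more robust route for the inner tail is to not bound $(\sin\alpha u/u)^2$ by $\alpha^2$ but rather to integrate $\int_{-X^{A_4}}^{X^{A_4}}(\sin\alpha u/u)^2\,\d u \leq \int_{-\infty}^\infty (\sin\alpha u/u)^2\,\d u = \pi\alpha$, which directly yields the single power of $\alpha$ and sidesteps the issue entirely; this is the version I would actually write. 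Everything else is a routine application of the trivial bound on $F_0$, Lemma~\ref{lem:simplify0}(i) for $|R(x)|^2$, and (for the outer tail) the super-polynomial decay of the Gaussian weight.
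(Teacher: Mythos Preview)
There is a genuine gap in your treatment of the inner tail $|x|\leq Y_1$. You write that you would ``bound $|F_0(x+u)|$ trivially by $\sum_{\lambda_n\leq 2\alpha} a_n$ \ldots\ since only frequencies up to $2\alpha$ appear after the convolution''. But $F_0(x+u)=\sum_{n\leq X^{A_1}} a_n\cos((x+u)\lambda_n)$ contains \emph{all} frequencies $\lambda_n$ with $n\leq X^{A_1}$; the truncation to $\lambda_n\leq 4\alpha$ is a property of the \emph{full} $u$-convolution, not of the integrand. So the only trivial pointwise bound available is $|F_0(x+u)|\leq \sum_{n\leq X^{A_1}} a_n$. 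Your ``alternative, more robust route'' of using $\int_{-\infty}^{\infty}(\sin\alpha u/u)^2\,\d u=\pi\alpha$ does fix the $\alpha$-bookkeeping, but still leaves you with $\alpha\, X^{A_3} e^{2M/C_1}\sum_{n\leq X^{A_1}} a_n$, not the stated $\alpha\, X^{A_3} e^{2M/C_1}\sum_{\lambda_n\leq 4\alpha} a_n$. In the applications this is fatal: for the divisor problem $\sum_{n\leq X^3} d(n)/n^{3/4}\asymp X^{3/4}\log X$ whereas $\sum_{\lambda_n\leq 4\alpha} a_n\ll (\log X)^{1/4+o(1)}$, so after dividing by $I_2\alpha$ your inner-tail error would be $\gg X^{A_3-A_2+3/4+o(1)}=X^{1/4+o(1)}$, which swamps the main term.

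The paper avoids this by a small but essential reorganisation: on the complementary $x$-set it first \emph{extends} the $u$-integral from $[-X^{A_4},X^{A_4}]$ back to $(-\infty,\infty)$, paying exactly the second error term $I_2X^{-A_4}\sum_{n\leq X^{A_1}} a_n$ via the same estimate as (\ref{eq:tail_conv_formula}). With the full $u$-line in hand it then applies Lemma~\ref{lem:convolution_formula}, which evaluates the inner integral as $\tfrac12\sum_{\lambda_n} a_n w_\alpha(\lambda_n)e^{i\lambda_n x}$; the weight $w_\alpha$ is supported on $\lambda_n\leq 4\alpha$ and is bounded by $\pi\alpha$, simultaneously producing the single power of $\alpha$ \emph{and} the frequency truncation. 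Integrating the remaining $|R(x)|^2\Phi(x/Y_2)$ over $|x|\leq Y_1$ with Lemma~\ref{lem:simplify0}(i) then gives the first error term as stated. Your outer-tail argument via Gaussian decay is fine, but for the inner tail you need this extend-then-convolve step.
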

\begin{proof}
We write
\begin{align}
 I_1&=\left(\int_{Y_1}^{Y_2(\log Y_2)^2}+\int_{-Y_2(\log Y_2)^2}^{-Y_1}\right)\int_ {-X^{A_4}}^{X^{A_4}}F_0(x+u)\left(\frac{\sin \alpha u}{u}\right)^2e^{i2\alpha u}|R(x)|^2\Phi\left(\frac{x}{Y_2}\right)\d u\d x\\
 \notag
 &+ E_1 +E_2,
\end{align}
where 
\begin{align*}
 &E_1\\
 &=\left(\int^{\infty}_{Y_2(\log Y_2)^2}+\int_{-\infty}^{-Y_2(\log Y_2)^2}+\int^{Y_1}_{-Y_1}\right)\int_ {-\infty}^{\infty}F_0(x+u)\left(\frac{\sin \alpha u}{u}\right)^2e^{i2\alpha u}|R(x)|^2\Phi(x/Y_2)\d u\d x 
\end{align*}
 and
 \begin{align*}
 &E_2\\
 &=\left(\int^{\infty}_{Y_2(\log Y_2)^2}+\int_{-\infty}^{-Y_2(\log Y_2)^2}+\int^{Y_1}_{-Y_1}\right)\left(\int_ {X^{A_4}}^{\infty} + \int^{-X^{A_4}}_{-\infty}\right)\\
 &\hspace{5cm}F_0(x+u)\left(\frac{\sin \alpha u}{u}\right)^2e^{i2\alpha u}|R(x)|^2\Phi(x/Y_2)\d u\d x.
\end{align*}

As we have obtained (\ref{eq:tail_conv_formula}) in proof of Lemma~\ref{lem:I1byI2}, we may obtain
\begin{align*}
 E_2=O\left(I_2X^{-A_4}\sum_{n\leq X^{A_1}}a_n\right).
\end{align*}
First we use Lemma~\ref{lem:convolution_formula} and bound the covolution integral of $F_0(x+u)$ and then bound $E_1$ as below,
\begin{align*}
&E_1\\
&=\left(\int^{\infty}_{Y_2(\log Y_2)^2}+\int_{-\infty}^{-Y_2(\log Y_2)^2}+\int^{Y_1}_{-Y_1}\right)\int_ {-\infty}^{\infty}F_0(x+u)\left(\frac{\sin \alpha u}{u}\right)^2e^{i2\alpha u}|R(x)|^2\Phi(x/Y_2)\d u\d x \\
&\ll \left(\alpha\sum_{\lambda_n\leq 4\alpha}a_n\right)\left(\int_{Y_2(\log Y_2)^2}^{\infty}+\int^{-Y_2(\log Y_2)^2}_{-\infty} +\int^{-Y_1}_{Y_1}\right) |R(x)|^2\Phi(x/Y_2)\d x \\
& \ll \alpha  X^{A_3} \exp\left(\frac{2M}{C_1}\right)\sum_{\lambda_n\leq 4\alpha}a_n. 
\end{align*}
The final step comes from the observation that the contribution from the integral $\int^{-Y_1}_{Y_1}$ is the largest, and the bounds $Y_1\ll X^{A_3}$ and $|R(x)|^2\ll \exp\left(\frac{2M}{C_1}\right)$ from Lemma~\ref{lem:simplify0}.
\end{proof}

Lemma~\ref{lem:convolution_formula} relates $F_\beta(x)$ to $F_0(x)$. We further quantify this idea in the following lemma.

\begin{lem} \label{lem:Fbeta_to_F0}

We may relate $I_1$ with integral involving $F_\beta$ as follows:


\begin{align*}
&\left|\int_{Y_1}^{Y_2(\log Y_2)^2}\int_ {-X^{A_4}}^{X^{A_4}}F_\beta(x+u)\left(\frac{\sin \alpha u}{u}\right)^2e^{-i2\alpha u}|R(x)|^2\Phi(x/Y_2)\d u\d x\right|\\
 &=\frac{1}{2} I_1 + O\left(\alpha  X^{A_3} e^{2M/C_1}\left(\sum_{\lambda_n\leq 4\alpha}a_n\right) + I_2X^{-A_4}\left(\sum_{n\leq X^{A_1}}a_n\right)\right).
\end{align*}
\end{lem}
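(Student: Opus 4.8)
The plan is to pass from $F_\beta$ to $F_0$ using the convolution formula (Lemma~\ref{lem:convolution_formula}) and then identify the resulting main term with $\tfrac12 I_1$ up to the same error terms already controlled in Lemma~\ref{lem:simplifyI1}. The starting point is to apply Lemma~\ref{lem:convolution_formula} with $F_\beta$ in place of $F_0$: for each fixed $x$,
\[
\int_{-\infty}^{\infty}F_\beta(x+u)\left(\frac{\sin\alpha u}{u}\right)^2 e^{-i2\alpha u}\,\d u
=\tfrac12 e^{i\beta}\sum_{\lambda_n}a_n w_\alpha(\lambda_n)e^{i\lambda_n x},
\]
which differs from the $\beta=0$ version only by the unimodular factor $e^{i\beta}$. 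So after multiplying by $|R(x)|^2\Phi(x/Y_2)$ and integrating over $x\in[Y_1,Y_2(\log Y_2)^2]$, the full-line $u$-integral produces exactly $e^{i\beta}$ times the corresponding quantity in the $F_0$ case; taking absolute values kills $e^{i\beta}$.

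The key steps, in order, would be: (1) replace the inner $u$-integral $\int_{-X^{A_4}}^{X^{A_4}}$ by the full line $\int_{-\infty}^{\infty}$, at the cost of the error $O\!\left(I_2 X^{-A_4}\sum_{n\le X^{A_1}}a_n\right)$ --- this is exactly the bound \eqref{eq:tail_conv_formula} from the proof of Lemma~\ref{lem:I1byI2}, which holds verbatim with $F_\beta$ in place of $F_0$ since the estimate only uses $|\cos|\le 1$ and positivity of the $a_n$; (2) apply Lemma~\ref{lem:convolution_formula} to the full-line $u$-integral to get $\tfrac12 e^{i\beta}\sum_{\lambda_n}a_n w_\alpha(\lambda_n)\int_{Y_1}^{Y_2(\log Y_2)^2}|R(x)|^2\Phi(x/Y_2)e^{i\lambda_n x}\,\d x$; (3) take absolute values, so the factor $e^{i\beta}$ disappears, leaving $\tfrac12\left|\sum_{\lambda_n}a_n w_\alpha(\lambda_n)\int_{Y_1}^{Y_2(\log Y_2)^2}|R(x)|^2\Phi(x/Y_2)e^{i\lambda_n x}\,\d x\right|$; (4) recognize that this is, up to the same truncation/tail errors, half of $|I_1|$: indeed in Lemma~\ref{lem:simplifyI1} it was shown that $I_1$ equals the integral over $[Y_1,Y_2(\log Y_2)^2]\cup[-Y_2(\log Y_2)^2,-Y_1]$ plus errors, and by symmetry (the integrand for $F_0$ has matching contributions on the positive and negative $x$-ranges, or more directly by reversing step (2) applied to the restricted $x$-range) the integral over just $[Y_1,Y_2(\log Y_2)^2]$ accounts for half of $I_1$ up to the stated error; (5) collect the errors $O\!\left(\alpha X^{A_3}e^{2M/C_1}\sum_{\lambda_n\le 4\alpha}a_n + I_2 X^{-A_4}\sum_{n\le X^{A_1}}a_n\right)$, which are precisely those already appearing in Lemma~\ref{lem:simplifyI1}.

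The main obstacle is step (4): being careful that the $F_0$-integral restricted to the single range $[Y_1,Y_2(\log Y_2)^2]$ really is $\tfrac12 I_1 + (\text{same errors})$, rather than something that only matches after also taking absolute values on the $F_0$ side. The cleanest way to handle this is to run the argument of Lemma~\ref{lem:I1byI2} and Lemma~\ref{lem:simplifyI1} \emph{on the restricted range} rather than the full real line: apply Lemma~\ref{lem:convolution_formula} to recover the $\sum_{\lambda_n}a_n w_\alpha(\lambda_n)\int\cdots$ expression on $[Y_1,Y_2(\log Y_2)^2]$ for both $F_0$ and $F_\beta$ simultaneously, observe the two differ only by $e^{i\beta}$, and then invoke Lemma~\ref{lem:simplifyI1} to tie the $F_0$ version back to $I_1$. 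Since $\Phi$ is even and the $w_\alpha(\lambda_n)$ are real and nonnegative, the $x$-integral over $[Y_1,Y_2(\log Y_2)^2]$ and over $[-Y_2(\log Y_2)^2,-Y_1]$ are complex conjugates, so each contributes half of the real quantity $I_1$ (up to the errors), which gives the factor $\tfrac12$. The remaining bookkeeping --- that the truncation error in $u$ and the small-$|x|$ and large-$|x|$ error in $x$ are all bounded exactly as before --- is routine and reuses \eqref{eq:tail_conv_formula} and the bound $|R(x)|^2\ll e^{2M/C_1}$ from Lemma~\ref{lem:simplify0}.
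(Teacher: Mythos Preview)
Your proposal is correct and follows essentially the same route as the paper: extend the $u$-integral to the full line via the tail bound \eqref{eq:tail_conv_formula}, apply Lemma~\ref{lem:convolution_formula} so that $F_\beta$ and $F_0$ differ only by the unimodular factor $e^{i\beta}$ (which disappears under absolute value), and then use the complex-conjugate symmetry between the $x$-ranges $[Y_1,Y_2(\log Y_2)^2]$ and $[-Y_2(\log Y_2)^2,-Y_1]$ (coming from $\Phi$ even and $|R(-x)|=|R(x)|$) together with Lemma~\ref{lem:simplifyI1} to recover $\tfrac12 I_1$ up to the stated errors. Your articulation of step~(4) via the conjugate-pair observation is in fact more explicit than the paper's, which passes from $|A|$ to $\tfrac12|A|+\tfrac12|\bar A|\ge \tfrac12|A+\bar A|$ without spelling out why the negative-range integral is $\bar A$; note that in both arguments one really obtains $\ge \tfrac12 I_1 + O(\cdots)$, which is all that is used downstream.
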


\begin{proof}
 
We use the bound (\ref{eq:tail_conv_formula}) of Lemma~\ref{lem:I1byI2} again and obtain
 
 \begin{align*}
&\left|\int_{Y_1}^{Y_2(\log Y_2)^2}\int_ {-X^{A_4}}^{X^{A_4}}F_\beta(x+u)\left(\frac{\sin \alpha u}{u}\right)^2e^{-i2\alpha u}|R(x)|^2\Phi(x/Y_2)\d u\d x\right|\\
&= \left|\int_{Y_1}^{Y_2(\log Y_2)^2}\int_ {-\infty}^{\infty}F_\beta(x+u)\left(\frac{\sin \alpha u}{u}\right)^2e^{-i2\alpha u}|R(x)|^2\Phi(x/Y_2)\d u\d x\right|\\
&+O\left(X^{-A_4}I_2\sum_{n\leq X^{A_1}} a_n\right).
\end{align*}
Now we have a chance to use Lemma~\ref{lem:convolution_formula} to absorb $e^{i\beta}$ in the absolute value and convert
$F_\beta$ to $F_0$. We obtain from the above
\begin{align*}
&=\frac{1}{2}\left|\int_{Y_1}^{Y_2(\log Y_2)^2}\int_ {-\infty}^{\infty}F_0(x+u)\left(\frac{\sin \alpha u}{u}\right)^2e^{-i2\alpha u}|R(x)|^2\Phi(x/Y_2)\d u\d x\right|\\
&+ \frac{1}{2}\left|\int^{-Y_1}_{-Y_2(\log Y_2)^2}\int_ {-\infty}^{\infty}F_0(x+u)\left(\frac{\sin \alpha u}{u}\right)^2e^{-i2\alpha u}|R(x)|^2\Phi(x/Y_2)\d u\d x\right|\\
&+O\left(X^{-A_4}I_2\sum_{n\leq X^{A_1}} a_n\right) \\
&\geq\frac{1}{2}\left|\int_{Y_1}^{Y_2(\log Y_2)^2}+\int^{-Y_1}_{-Y_2(\log Y_2)^2}\int_ {-\infty}^{\infty}F_0(x+u)\left(\frac{\sin \alpha u}{u}\right)^2e^{-i2\alpha u}|R(x)|^2\Phi(x/Y_2)\d u\d x\right|\\
&+O\left(X^{-A_4}I_2\sum_{n\leq X^{A_1}} a_n\right).
\end{align*}
We use Lemma~\ref{lem:simplifyI1} to complete the above expression to $I_1$ and obtain
\begin{align*}
& = \frac{1}{2}\left|\int_{-\infty}^{\infty}\int_ {-\infty}^{\infty}F_0(x+u)\left(\frac{\sin \alpha u}{u}\right)^2e^{-i2\alpha u}|R(x)|^2\Phi(x/Y_2)\d u\d x\right|\\
&+O\left(\alpha  X^{A_3} e^{2M/C_1}\left(\sum_{\lambda_n\leq 4\alpha}a_n\right) + I_2X^{-A_4}\left(\sum_{n\leq X^{A_1}}a_n\right)\right) \\
&= \frac{1}{2}I_1 + O\left(\alpha  X^{A_3} e^{2M/C_1}\left(\sum_{\lambda_n\leq 4\alpha}a_n\right) + I_2X^{-A_4}\left(\sum_{n\leq X^{A_1}}a_n\right)\right)
\end{align*}
 \end{proof}

\begin{proof}[\textbf{Proof of Theorem~\ref{thm:main}}]~

We may note, by the resonance method
\begin{align*}
 &\max_{X^{A_3}/2<x\leq 2A_2^2X^{A_2}(\log X)^2}|F_\beta(x)|\\
 &\geq \frac{1}{I_2\alpha}\left|\int_{Y_1/2}^{2Y_2(\log Y_2)^2}\int_ {-X^{A_4}}^{X^{A_4}}F_\beta(x+u)\left(\frac{\sin \alpha u}{u}\right)^2|R(x)|^2\Phi(x/Y_2)\d u\d x \right|\\
 &\geq \frac{I_1}{2I_2\alpha}+O\left( \frac{X^{A_3}e^{2M/C_1}}{I_2}\left(\sum_{\lambda_n\leq 4\alpha}a_n\right) + \frac{X^{-A_4}}{\alpha}\left(\sum_{n\leq X^{A_1}}a_n\right)\right),
 \end{align*}
 where the last lower bound is obtained using Lemma~\ref{lem:simplifyI1}. Now we use Lemma~\ref{lem:I1byI2} for a lower bound of $I_1/I_2$ and Lemma~\ref{lem:simplify0} for a lower bound of $I_2$ to obtain
 \begin{align*}
 &\geq \frac{1}{4\alpha}\sum_{\lambda_n\in\M}a_n r(\lambda_n)w_{\alpha}(\lambda_n) \\
&+O\left( X^{A_3-A_2}e^{\frac{14-C_1}{7C_1}M}\left(\sum_{\lambda_n\leq 4\alpha}a_n\right) + \frac{X^{-A_4}}{\alpha}\left(\sum_{n\leq X^{A_1}}a_n\right)\right)\\
&\geq \frac{\pi}{4e}\sum_{\lambda_n\in \M}a_n+ O\left(X^{A_3-A_2}e^{2M/C_1}\left(\sum_{\lambda_n\leq 4\alpha}a_n\right) + \frac{X^{-A_4}}{\alpha}\left(\sum_{n\leq X^{A_1}}a_n\right)\right),
\end{align*}
where we have used $r(\lambda_n)\geq e^{-1}$ and $w_{\alpha}(\lambda_n)\geq \alpha \pi$.
\end{proof}

\section{Applications}\label{sec:applications}
Below we discuss some applications of Theorem~\ref{thm:main}.

\subsection{Divisor Problem}(Proof of Theorem~\ref{thm:divisor})~

By Voronoi formula \cite{tit}, for large enough $X$ such that $\sqrt X \leq x \leq X^{3/2}$, we have
\begin{equation}
\label{eq:voronoi}
 \frac{\pi \sqrt 2}{\sqrt x}\Delta(x^2)=\sum_{n\leq X^3} \frac{d(n)}{n^{3/4}}\cos\left(4\pi \sqrt n x -\pi/4\right) + O(X^{\epsilon-\half}),
\end{equation}
for any $\epsilon>0$.

Let 
\begin{align*}
F_\beta(x)&=\sum_{n\leq X^3} \frac{d(n)}{n^{3/4}}\cos\left(4\pi \sqrt n x -\pi/4\right)\\
&=: \sum_{n\leq X^{A_1}} \frac{d(n)}{n^{3/4}}\cos\left(\lambda_n x + \beta\right),
\end{align*}

where $\lambda_n=4\pi \sqrt n, \, a_n=d(n)/n^{3/4}, \beta=-\pi/4, A_1=3$.

Further, let
\[A_2=3/2, A_3=1 \text{ and } A_4=7/8.\]

We construct the resonating set $\M$ following the construction of Soundarajan\cite{So1},
\[\M:=\left\{\lambda_n:n\in[C_1\alpha, 2 \alpha], \omega(n)=[\lambda\log_2\alpha], n \text{ square free}\right\},\]
where $\omega(n)$ denotes the number of distinct prime factors of $n$.

Note that
\begin{align*}
  \sum_{n\leq X^3} \frac{d(n)}{n^{3/4}} \ll X^{3/4}\log X.
 \end{align*}

 By Sathe's \cite{sathe} and Stirling's formula
 \[M\asymp \frac{\alpha}{\sqrt{\log_2 \alpha}}(\log \alpha)^{\lambda-1-\lambda\log \lambda}.\]
 Now 
 \begin{align*}
  \max_{X^{A_3}/2<x\leq 2A_2^2X^{A_2}(\log X)^2}|F_\beta(x)|
 &\geq \frac{\pi}{4e}\sum_{\lambda_n\in \M}\frac{d(n)}{n^{3/4}} + \text{Error} \\
 & \geq \frac{\pi}{2^{11/4}e}\frac{M2^{\lambda \log_2 \alpha}}{\alpha^{3/4}} + \text{Error}\\
 &\gg \frac{\alpha^{1/4}}{(\log_2 \alpha)^{1/2}}(\log \alpha)^{\lambda\log2+\lambda-1-\lambda\log\lambda} + \text{Error}.
\end{align*}
We choose 
$\alpha=\frac{1}{C}(\log X)(\log_2 X)^{1-\lambda+\lambda\log\lambda}(\log_3 X)^{1/2}$, obtaining
\begin{align*}
  \max_{X/2<x\leq 5X^{3/2}(\log X)^2}|F_\beta(x)|
  &\gg (\log X)^{1/4} (\log_2X)^{\lambda\log 2+ (3/4)(\lambda-1-\lambda\log\lambda)}(\log_3 X)^{-3/8}+ \text{Error}.
\end{align*}

On optimizing $\lambda=2^{4/3}$, we get
\[\max_{X/2<x\leq 5X^{3/2}(\log X)^2}|F_\beta(x)|
  \gg (\log X)^{1/4} (\log_2X)^{ (3/4)(2^{4/3}-1)}(\log_3 X)^{-3/8}+ \text{Error}.\]
  
To estimate the error, note
\begin{align*}
M&\asymp \frac{\alpha}{\sqrt{\log_2 \alpha}}(\log \alpha)^{\lambda-1-\lambda\log \lambda}\\
& \asymp \frac{\log X}{C}.
\end{align*}
We choose $C$ large enough such that  
\begin{align*}
 e^{2M/C}\ll X^{1/32-\epsilon} 
\end{align*}
for any $\epsilon>0$.

Now we compute the error as follows:
\begin{align*}
 \text{Error}&\ll X^{A_3-A_2}e^{2M/C_1}\left(\sum_{\lambda_n\leq 4\alpha}\frac{d(n)}{n^{3/4}}\right) + \frac{X^{-A_4}}{\alpha}\left(\sum_{n\leq X^{A_1}}
 \frac{d(n)}{n^{3/4}}\right)\\
 &\ll X^{-1/2+1/32-\epsilon}\alpha^{1/4}\log \alpha+\frac{X^{-7/8+3/4}\log X}{\alpha}\\
 &\ll X^{-15/16}+ X^{-1/8}\ll X^{-1/8}.
\end{align*}

This completes proof of Theorem~\ref{thm:divisor}.

\subsection{Circle Problem}(Proof of Theorem~\ref{thm:circle})~

For $\sqrt X\leq x\leq X^{3/2}$,
\[-\frac{\pi}{\sqrt x}P(x)=\sum_{n\leq X^3}\frac{r(n)}{n^{3/4}}\cos(2\pi\sqrt n x + \pi/4) + O(X^\epsilon).\]
So
\begin{align*}
F_\beta(x)&=: \sum_{n\leq X^{A_1}} \frac{r(n)}{n^{3/4}}\cos\left(\lambda_n x + \beta\right),
\end{align*}
where $\lambda_n=2\pi \sqrt n, \, a_n=r(n)/n^{3/4}, \beta=\pi/4, A_1=3$. We also choose $A_2=3/2, A_3=1 \text{ and } A_4=7/8$. 
\[\M=\left\{\lambda_n:n\in[C_1\alpha, 2 \alpha], n \text{ has }[\lambda\log_2\alpha]\text{ prime factors } p\equiv1\mod 4, n \text{ square free}\right\}.\]
By \cite{ten} and using Stirling's formula
\begin{align*}
M\asymp\frac{\alpha}{\log\alpha}\frac{(\half \log_2\alpha)^{[\lambda\log_2\alpha]-1}}{([\lambda \log_2\alpha]-1)!}\asymp \frac{\alpha}{\sqrt{\log_2 \alpha}}(\log \alpha)^{\lambda-1-\lambda\log \lambda-\lambda\log 2}.\\
\end{align*}
 Also note
\[r(m)\geq 2^{[\lambda\log_2 \alpha]}\gg(\log \alpha)^{\lambda\log 2}.\]
We may do the computation as before and obtain
\[\max_{X/2<x\leq 5X^{3/2}(\log X)^2}|F_\beta(x)|\gg \frac{\alpha^{1/4}}{(\log_2 \alpha)^{1/2}}(\log \alpha)^{\lambda-1-\lambda\log\lambda} + \text{Error}.\]
We choose 
$\alpha=\frac{1}{C}(\log X)(\log_2 X)^{1-\lambda+\lambda\log\lambda+\lambda\log2}(\log_3 X)^{1/2},$
and optimize $\lambda=2^{1/3}$, to get

\[\max_{X/2<x\leq 2X^{3/2}(\log X)^2}|F_\beta(x)|\gg \frac{(\log X)^{1/4}}{(\log_3 X)^{3/8}}(\log_2 X)^{\frac{3}{4}(2^{1/3}-1)} + \text{Error}.\]

To estimate the error, we notice that the choice of $\alpha$ is such that 
\[M\asymp \frac{\log X}{C},\]
and we choose $C$ large enough such that 
$ e^{2M/C}\ll X^{1/32-\epsilon}$. Again the calculation for the error term is similar to that of $\Delta(x)$, and we get
 \begin{align*}
 \text{Error}&\ll X^{A_3-A_2}e^{2M/C_1}\left(\sum_{\lambda_n\leq 4\alpha}\frac{r(n)}{n^{3/4}}\right) + \frac{X^{-A_4}}{\alpha}\left(\sum_{n\leq X^{A_1}}
 \frac{r(n)}{n^{3/4}}\right)\\
 &\ll X^{-1/2+1/32-\epsilon}\alpha^{1/4}\log \alpha+\frac{X^{-7/8+3/4}}{\alpha}\ll X^{-1/8}.
\end{align*}

\subsection{k-Divisor Problem}(Proof of Theorem~\ref{thm:piltz})~

By Soundararajan \cite{So}
 \begin{align*}
  \frac{\alpha^{1/k}}{\sqrt \pi}\int_{-\infty}^{\infty}&\Delta_k(x^ke^{u/x})e^{-u^2\alpha^{2/k}}du = O(x^{k/2-{3/5}}\alpha^{\half+\epsilon})\\
  &+\frac{x^{\frac{k-1}{2}}}{\pi \sqrt k}\sum_{n=1}^{\infty}\frac{d_k(n)}{n^{\frac{k+1}{2k}}}\exp(-\pi^2(n/\alpha)^{2/k})\cos\left(2\pi k n^{1/k}x+\frac{k-3}{4}\pi\right)
 \end{align*}

 For our application, we choose the range of $X$ as  $X/2\leq x\leq X^2$ and 
\[\alpha=\frac{1}{C(k)}\log X(\log_2 X)^{1+\lambda\log\lambda-\lambda}(\log_3 X)^{1/2}.\]
Also note that for such a large $\alpha$, the integral involving the Gaussian kernel behaves almost like the Dirac delta and approximate $\Delta_k(x^k)$. We may write
\[ \text{max}_{-1\leq h\leq 1}\Delta(x^ke^h)\geq\frac{\alpha^{1/k}}{\sqrt \pi}\int_{-\infty}^{\infty}\Delta_k(x^ke^{u/x})e^{-u^2\alpha^{2/k}}du + O(e^{-x}).\]
Truncating the infinite series at $n= X^{8/5}$,
\begin{align*}
 \text{max}_{-1\leq h\leq 1}\Delta(x^ke^h)\geq &x^{\frac{k-1}{2}}{\pi \sqrt k}\sum_{n=1}^{X^{8/5}}\frac{d_k(n)}{n^{\frac{k+1}{2k}}}\exp(-\pi^2(n/\alpha)^{2/k})\cos\left(2\pi k n^{1/k}x+\frac{k-3}{4}\pi\right)\\
 &+ O(x^{k/2-{3/5}+\epsilon}).
\end{align*}
As the above error is small, it is enough to consider  
\[F_\beta(x)=\sum_{n=1}^{X^2}\frac{d_k(n)}{n^{\frac{k+1}{2k}}}\exp(-\pi^2(n/\alpha)^{2/k})\cos\left(2\pi k n^{1/k}x+\frac{k-3}{4}\pi\right),\]

with $\lambda_n= 2\pi k n^{1/k},a_n=\frac{d_k(n)}{n^{\frac{k+1}{2k}}}\exp(-\pi^2(n/\alpha)^{2/k}), \beta=\frac{k-3}{4}\pi$ and $A_1=8/5$. Let $A_2=3/2, A_3=1 \text{ and } A_4=9/10$ and
let
\begin{align*}
\M:=\left\{\lambda_n:n\in[C_1(k)\alpha, 2 \alpha], \omega(n)=[\lambda\log_2\alpha], n \text{ square free}\right\},
\end{align*}
and as done in case of $\Delta(x)$,
\begin{align*}
M\asymp \frac{\alpha}{\sqrt{\log_2 \alpha}}(\log \alpha)^{\lambda-1-\lambda\log \lambda}.\\
\end{align*}
The choice of $\alpha$ is such that we make $M$ of order $\log X$. So we choose 
\[\alpha=\frac{1}{C(k)}(\log X)(\log_2 X)^{1+\lambda\log\lambda-\lambda}(\log_3 X)^\half.\]
Then 
\[M\asymp \frac{1}{C(k)}\log X.\]
Let $C(k)$ be large enough such that
\[e^{2M/C_1(k)}\ll X^{1/4}.\]
Note $d_k(m)\geq k^{[\lambda\log_2\alpha]}\gg(\log_2X)^{\lambda \log k}$. Putting it in Theorem~\ref{thm:main}, we obtain
\begin{align*}
\max_{X/2<x\leq 5X^{3/2}(\log X)^2}|F_\beta(x)|
&\gg\sum_{n\in \M}\frac{d_k(n)}{n^{\frac{k+1}{2k}}}\exp(-\pi^2(n/\alpha)^{2/k})+\text{Error}\\
&\gg \frac{\alpha^{1-\frac{k+1}{2k}}(\log_2X)^{\lambda\log k}}{(\log_2 \alpha)^{1/2}}(\log \alpha)^{\lambda-1-\lambda\log\lambda} + \text{Error}\\
&\gg \frac{(\log X)^{\frac{k-1}{2k}}}{(\log_3 X)^{\half-\frac{k-1}{4k}}}(\log_2 X)^{\lambda \log k+\frac{k+1}{2k}(\lambda-1-\lambda\log\lambda)} + \text{Error}.
\end{align*}

Optimizing $\lambda$ at $\lambda=k^{\frac{2k}{k+1}}$, we get

 \[\max_{X/2<x\leq 5X^{3/2}(\log X)^2}|F_\beta(x)|\gg \frac{(\log X)^{\frac{k-1}{2k}}}{(\log_3 X)^\frac{k+1}{4k}}(\log_2 X)^{\frac{k+1}{2k}(k^{\frac{2k}{k+1}}-1)} + \text{Error}.\]

Similar computation to $\Delta(x)$ shows
\begin{align*}
 \text{Error}
 &\ll X^{-1/2+1/4+\epsilon}+\frac{X^{-9/10+8(k-1)/(10k) +\epsilon}}{\alpha}\ll X^{-1/10}.
\end{align*}

We further notice, if $k\equiv 3\pmod 8$, $\beta=0$ and we obtain an $\Omega_+$ result; and if $k\equiv 7\pmod 8$, $\beta=\pi$ and we obtain an $\Omega_-$ result.

\subsection{Mean Square of the Riemann Zeta}(Proof of Theorem~\ref{thm:riemann})~

Consider $E_1(x) :=\frac{1}{\sqrt{2x}}E(2\pi x^2)$ for $x\geq 1$. It is sufficient obtain a lower bound for $E_1(x)$. Define
\begin{align*}
 P(x, \tau)&=\sum_{n\leq \tau^2}(-1)^nd(n)n^{-3/4}\cos(4\pi \sqrt n x)\left(1-|\frac{2\sqrt n}{\tau}|-1\right),\text{ and}\\
Q(x, \tau)&=\sum_{n\leq \tau^2}d(n)n^{-3/4}\cos(4\pi \sqrt n x)\left(1-|\frac{2\sqrt n}{\tau}|-1\right).
\end{align*}
Lau and Tsang \cite{Tsang} showed that
\begin{equation}\label{eq:tsangE1}
 \sup_{|y-x|\leq 1}|E_1(y)|\geq \half P(x, \tau) + O(\log \tau).
\end{equation}
They have also shown 
\begin{equation}\label{eq:TsangPQ}
 Q(x, \tau)=\sum_{j=1}^J\sum_{i=1}^Ja^jb^{-i}P(\sqrt 2^{j-i}x, \sqrt 2^{j+i}\tau)+O(\sqrt \tau),
\end{equation}
where $a=2^{1/4}-2^{-1/4}, b=2^{1/4}+2^{-1/4},$ and $J=2\log_2 \tau$.
We choose 
\[\tau\asymp (\log X)^{1/2}(\log_2 X)^{1/2(1-\lambda+\lambda\log\lambda)}(\log_3 X)^{1/4}.\]
Exact calculation for $F_\beta(x)$ in \cite{Tsang}, with choice of $\alpha=\tau^2$, can be carried out for $Q(x, \tau)$
to obtain
\[\max_{X/2<x\leq 5X^{3/2}(\log X)^2}Q(x, \tau)
  \gg (\log X)^{1/4} (\log_2X)^{ (3/4)(2^{4/3}-1)}(\log_3 X)^{-3/8}.\]
  From equation (\ref{eq:euler}), we obtain
  \[\max_{1\leq i, j\leq J}P(\sqrt 2^{j-i}x, \sqrt 2^{j+i}\tau)\gg Q(x, \tau)+O(\sqrt \tau).\]
  So
\[\max_{X/2<x\leq 5X^{3/2}(\log X)^4}P(x, \tau)
  \gg (\log X)^{1/4} (\log_2X)^{ (3/4)(2^{4/3}-1)}(\log_3 X)^{-3/8}.\]
  This gives our required lowerbound for $E(x).$
  
\subsection{Mean Square of the Dirichlet $L$-Function}(Proof of Theorem~\ref{thm:dirichlet})
For $x\geq 1$, define
\[E_1(q, x):=(2x)^{-1/2}E(q, 2\pi x^2).\]
Lau and Tsang\cite{Tsang2} proved that
\[\max_{X/2<x\leq X^{3/2}}|E_1(q, x)|\geq c'\frac{\phi(q)}{q^{3/4}}\left(\sum_{l|\prod_{r=1}^{\omega(q)}}l^{-1/4}\right)^{-1}P(x, \tau) + O(1),\]
 for some positive constant $c'$, and $P(x, \tau)$ and $\tau$ as defined for the mean square of the Riemann zeta function. Our required result follows from the above inequality.

\end{document}